\documentclass[11pt]{article}
\usepackage[letterpaper,margin=0.9in]{geometry}
\usepackage{amsmath,amssymb,amsthm,mathtools}
\usepackage{booktabs,tabularx,microtype}
\usepackage[colorlinks=true,linkcolor=blue,citecolor=blue,urlcolor=blue]{hyperref}
\usepackage{enumitem}
\usepackage{mathrsfs}
\newtheorem{theorem}{Theorem}[section]
\newtheorem{corollary}[theorem]{Corollary}
\newtheorem{lemma}[theorem]{Lemma}
\newtheorem{proposition}[theorem]{Proposition}
\theoremstyle{definition}
\newtheorem*{remark}{Remark}
\theoremstyle{plain}
\newcommand{\R}{\mathbb R}
\newcommand{\C}{\mathbb C}
\newcommand{\T}{\mathbb T}
\newcommand{\Sch}{\mathscr S}
\newcommand{\Z}{\mathbb Z}
\newcommand{\N}{\mathbb N}

\renewcommand{\Re}{\operatorname{Re}}
\renewcommand{\Im}{\operatorname{Im}}
\makeatletter
\NewCommandCopy\@@pmod\pmod
\DeclareRobustCommand{\pmod}{\@ifstar\@pmods\@@pmod}
\def\@pmods#1{\mkern4mu({\operator@font mod}\mkern 6mu#1)}
\makeatother
\allowdisplaybreaks[4]
\AtBeginDocument{
 \setlength{\abovedisplayskip}{9pt plus 2pt minus 4pt}
 \setlength{\belowdisplayskip}{9pt plus 2pt minus 4pt}
}
\title{The optimal van der Corput property for squares}
\author{
Steve Fan\thanks{Steve.Fan@uga.edu} \and
Andrew Lott\thanks{Andrew.Lott@uga.edu}
}
\date{October 11, 2026}
\begin{document}
\maketitle

\begin{center}
University of Georgia
\end{center}

\begin{abstract}
We prove a sharp quantitative form of the van der Corput property for squares. For all sufficiently large $N$, we construct a real trigonometric polynomial $P(\theta)=a_0+\sum_{1\le n\le\sqrt N}a_n\cos(2\pi n^2\theta)$ satisfying
\begin{equation}\label{eq:abstract-vdc}
 P(\theta)\ge0\quad(\theta\in\R/\Z),\qquad P(0)=1,
\end{equation}
with 
\[0\le a_0\le\exp(-c\sqrt{\log N\log\log N}),\]
where $c>0$ is an absolute constant. As a consequence, if $A\subseteq\N\cap[1,N]$ has no nonzero square difference, then 
\[|A|\le N\exp(-c\sqrt{\log N\log\log N}),\]
improving upon a recent result of Green and Sawhney. Our construction builds on previous work of Slijep{\v{c}}evi{\'c}, Slijep{\v{c}}evi{\'c}--Nin{\v{c}}evi{\'c}, and Fan--Lott, and the proof of the upper bound is elementary and self-contained. We also show that every real trigonometric polynomial of the above form satisfying \eqref{eq:abstract-vdc} has 
\[a_0\ge\exp(-C\sqrt{\log N\log\log N})\]
for an absolute constant $C>0$. Thus the van der Corput property cannot be used to obtain a power-saving bound for the Furstenberg--S\'ark\"ozy theorem.
\end{abstract}

\tableofcontents

\section{Introduction}
For a set $F\subseteq \mathbb Z$ and $N>1$, write
\[
D(F,N):= \max\bigl\{|A|:\ A\subseteq [N]\ \text{and}\ (A-A)\cap F\subseteq\{0\}\bigr\},
\]
where $[N]:=\N\cap[1,N]$. Here, we refer to $F$ as the set of \emph{forbidden differences}. 

The Furstenberg--S\'ark\"ozy theorem asserts that $D(\{n^2:n\in\mathbb N\},N)=o(N)$, answering a question of Lov\'asz.
This was proved independently by Furstenberg~\cite{Fur77} using ergodic theory and by S\'ark\"ozy~\cite{sarkozy-i,Sar78} using the Hardy--Littlewood circle method and the density increment method of Roth; S\'ark\"ozy's proof gives the quantitative bound
\[
D(\{n^2:n\in\mathbb N\},N) \ll N(\log\log N)^{2/3}(\log N)^{-1/3}.
\]
Pintz--Steiger--Szemer\'edi~\cite{pintz-steiger-szemeredi} introduced a more elaborate double iteration strategy to obtain a bound of the form
\[
D(\{n^2:n\in\mathbb N\},N) \ll N(\log N)^{-c_1\log\log\log\log N}
\]
with $c_1=1/12$, and Bloom--Maynard~\cite{bloom-maynard} later strengthened this to
\[
D(\{n^2:n\in\mathbb N\},N) \ll N(\log N)^{-c_2\log\log\log N}
\]
where $c_2>0$ is an absolute constant.
Green and Sawhney~\cite{GreenSawhney2024FSv1,GreenSawhney2025FSv2} developed a new ``arithmetic level-$d$'' inequality (building on hypercontractivity estimates of Keller--Lifshitz--Marcus~\cite{KLM23} and Keevash--Lifshitz--Long--Minzer~\cite{kllm}) and pushed the density increment strategy to obtain the following quasipolynomial bound
\begin{equation}
D(\{n^2:n\in\mathbb N\},N) \ll N\exp\bigl(-c_3(\log N)^{1/2}\bigr),
\end{equation}
for some constant $c_3>0$.
They also construct an example which they interpret as ruling out bounds
stronger than $N\exp(-c(\log N)^{3/4})$ by the density increment method,
``at least in anything like the form in the present paper (and in previous works)''
\cite[Section~9]{GreenSawhney2025FSv2}.

\begin{samepage}
Another approach for bounding $D(F,N)$ is the quantitative van der Corput
method. For a positive integer $N$ and a small parameter $\delta>0$,
one seeks a real trigonometric polynomial
\[
 P(\theta)=a_0+\sum_{n=1}^N a_n\cos(2\pi n\theta)
\]
with the following properties:
\begin{enumerate}[label=(\roman*)]
 \item $P(\theta)\ge0$ for every $\theta\in\T$;
 \item $P(0)=1$;
 \item $a_n=0$ whenever $1\le n\le N$ and $n\notin F\cup(-F)$;
 \item $0\le a_0\le\delta$.
\end{enumerate}
\end{samepage}
We call $P$ a \emph{van der Corput witness} for $F$. These properties imply $D(F,N)\ll N\delta$.  
This implication goes back to Kamae--Mendès France~\cite{KM78}
and Ruzsa~\cite{RuzsaConnections}; we give the proof for squares
in Appendix~\ref{sec:corollary}.
Montgomery~\cite{MontgomeryTenLectures} gives a useful account
of the method. A recent breakthrough of Green \cite{Green23} gives a demonstration of the strength
of this method: by proving a power-saving form of the van der Corput property for the
shifted primes, he obtained the first power-saving bound for Sárközy's theorem for
differences of the form \(p-1\) with $p$ prime.

For squares in $[N]$, Slijep\v{c}evi\'c~\cite{Slijepcevic2010Squares} constructed such polynomials with constant coefficient $O((\log N)^{-1/3})$.
Slijep\v{c}evi\'c's construction for squares averages over a carefully chosen
finite collection of dilations so that their real major-arc contributions
combine to give a uniform lower bound.
Nin\v{c}evi\'c and Slijep\v{c}evi\'c~\cite{NincevicSlijepcevic2013} subsequently used averaging over dilations for odd integer polynomials of degree $k\ge3$, obtaining constant coefficient $O((\log N)^{-1/k})$.

More recently, Fan and Lott \cite{FanLott} used the quantitative van der Corput method to prove that if $A\subseteq[N]$ and $A-A$ does not contain any positive integer of the form $x^2+y^2$ with $x,y\in\Z$, then
$|A|\ll_\epsilon N^{7/8+\epsilon}$
for every fixed $\epsilon>0$. The constructed cosine polynomials also have nonnegative coefficients. The key idea there is to average over dilations in the style of Nin\v{c}evi\'c--Slijep\v{c}evi\'c and then carefully select smooth weights so that the complete sum and oscillatory integral lie in the same sector in the complex plane defined by $|\arg z|\le\pi/4$, resulting in the positivity of the real part of their product. 

Building on these ideas, we prove the following quantitative van der Corput property for squares using cosine polynomials with signed coefficients.

\begin{theorem}\label{thm:main}
For every sufficiently large integer $N$ there is a real trigonometric polynomial
\[
 P(\theta)=a_0+\sum_{1\le n\le\sqrt N}a_n\cos(2\pi n^2\theta)
\]
such that $P\ge0$, $P(0)=1$, and
\[
 0\le a_0\le\exp(-c\sqrt{\log N\log\log N}),
\]
where $c>0$ is some constant. The coefficients $a_n$ are allowed to have either sign.
\end{theorem}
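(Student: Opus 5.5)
We would construct $P$ as an average over dilations of smoothed Weyl sums, in the spirit of Slijep\v{c}evi\'c and Fan--Lott, and then handle the remaining negativity with a multiplicative correction. Concretely, fix a smooth bump $w$ supported in $(0,1)$. For a set $Q$ of moduli $q\le Q_0$ with weights $\lambda_q\ge0$, consider
\[
 S(\theta)=\sum_{q\in Q}\lambda_q\sum_{n}w\Bigl(\frac{qn}{\sqrt N}\Bigr)\cos\bigl(2\pi q^2n^2\theta\bigr),
\]
so that only squares $(qn)^2\le N$ appear. On a major arc $\theta=a/r+\beta$, the inner sum is approximately
\[
 \frac{\sqrt N}{q}\,\frac{G(aq^2,r)}{r}\,\widehat{w}\text{-type integral}(\beta N),
\]
where $G$ is a quadratic Gauss sum. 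On minor arcs Weyl's inequality gives a bound that is small compared to the main term at $\theta=0$. The normalisation $P(0)=1$ makes $a_0$ comparable to the negative part of $S$ divided by $S(0)\asymp\sqrt N\sum\lambda_q/q$.

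The key step is choosing $Q$ and $\lambda_q$ so that the major-arc contributions add up to something bounded below by $-\delta S(0)$ uniformly in $a/r$. We would take $Q$ to be the squarefree products of primes up to $y$ with at most $k$ prime factors, and choose multiplicative weights. For each $r$, the Gauss sums $G(aq^2,r)/r$ then factor over prime powers $p^e\| r$. For $p\mid q$ the dilation kills the $p$-part, giving a factor essentially $1$ rather than a factor of size $p^{-e/2}$ with a sign or phase. Averaging over $q$ therefore turns the sum at a fraction $a/r$ into an Euler-type product
\[
 \prod_{p\mid r}\bigl(\mu_p+(1-\mu_p)\,\epsilon_p\,p^{-e/2}\bigr).
\]
Here $\mu_p$ is the weight fraction of moduli divisible by $p$. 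Following Fan--Lott, the phases of the Gauss sum and of the oscillatory integral $\int w(t)e(\beta N t^2)\,dt$ should be arranged so that the real part of their product is nonnegative. A smooth weight can be chosen so that the integral stays in the sector $|\arg z|\le\pi/4$. Gauss sum phases are eighth roots of unity times real numbers, and the residual phase is handled by using both $\theta$ and $-\theta$ through the cosine. The genuinely negative contributions therefore come only from $r$ having prime factors larger than $y$, or from combinations of too many factors, and these are suppressed by roughly $y^{-1/2}$ per large prime.

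For the optimisation, the main term at $0$ costs a factor $\sum_q\lambda_q/q$ against the error terms. These are the minor-arc Weyl bound, which is of order $N^{1/2-c}$ times the total weight, and the tails from unsieved $r$. Taking $y\approx\exp(\sqrt{\log N\log\log N})$ and $k\approx\sqrt{\log N/\log\log N}$ keeps $\max q\le y^k\le N^{\epsilon}$, so the Weyl and major-arc approximations remain valid. The negative part is then of relative size $\exp(-c\,k\log k)\approx\exp(-c\sqrt{\log N\log\log N})$, which matches the target. Finally, we set $P=(S+\delta S(0))/((1+\delta)S(0))$, so that $a_0=\delta/(1+\delta)$, and absorb the constant term that $S$ itself contains through the weights.

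We expect the main obstacle to be the uniform lower bound on major arcs. This requires controlling, for every $r\le N^{1/2}$ and every $\beta$, the signed product of local factors together with the oscillatory integral, with an error of the exact strength $\exp(-c\sqrt{\log N\log\log N})$. We would first try to prove that the weighted local product is a genuine nonnegative "sieve" quantity, for example by choosing $\lambda$ so that the average becomes a square $\bigl|\sum_{d\mid r}\cdots\bigr|^2$ up to smooth error. We would then bound the contribution of large prime factors of $r$ using the divisor-bounded truncation at $k$ prime factors.
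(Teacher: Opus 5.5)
Your construction has a genuine gap: the dilation weights $\lambda_q$ are nonnegative. With $\lambda_q\ge0$ and a nonnegative bump $w$, the coefficient of $\cos(2\pi m^2\theta)$ in $S$ is $w(m/\sqrt N)\sum_{q\mid m,\,q\in Q}\lambda_q\ge0$. So your $P$ has nonnegative coefficients, and Ruzsa's result quoted in the introduction then forces $a_0\gg1/\log N$. No choice of $Q$, $y$, $k$ can reach $\exp(-c\sqrt{\log N\log\log N})$.

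The complete sums, which depend only on the dilation weights, show where the outline breaks. At $r=p\equiv1\pmod4$ with $\bigl(\frac ap\bigr)=-1$, the normalized complete sum is exactly $\mu_p-(1-\mu_p)p^{-1/2}$, a negative \emph{real} number. The cosine does not help, since taking the real part already symmetrizes $\theta\leftrightarrow-\theta$, and at $\beta=0$ the oscillatory integral is positive. Keeping this above $-\delta$ for all $p\le\delta^{-2}/4$ forces $\mu_p\gg p^{-1/2}$ there. Hence, for $\mathbf q$ drawn from your weights, $\log\max Q\ge\mathbb E\log\mathbf q\ge\sum_p\mu_p\log p\gg\delta^{-1}$. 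With $\max Q\le N^{\epsilon}$ this gives $\delta\gg1/\log N$, and your cap of $k$ prime factors ($\sum_p\mu_p\le k$) makes the obstruction stronger. So the claim that negativity comes only from $r$ with a prime factor above $y\approx\exp(\sqrt{\log N\log\log N})$ is false.

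Squarefree dilations also fail on prime powers. For $r=p^3$ the complete sum is $\varepsilon_p\bigl(\frac ap\bigr)p^{-1/2}\bigl(\mu_p+(1-\mu_p)p^{-1}\bigr)$. For $p=5$ and $\bigl(\frac a5\bigr)=-1$ this is at most $-5^{-3/2}$ times the main term, which already gives $a_0\gg1$.

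The missing idea is \emph{signed} dilation weights that realize a divisibility profile no probability distribution on $[1,V]$ can have. The paper prescribes $b_d=c_0Y\int_0^1\beta_d(u)u^{c_0Y-1}\,du$, with $\beta_d(u)=d_1^{-c_0/Y}d_2^{-u/(2\log Y)}$ completely multiplicative and $Y^2=\log H/\log\log H$. Thus $b_p=p^{-c_0/Y}$ is close to $1$ for every $p\le Y^2$. These $b_d$ are the divisibility probabilities of a random integer that is typically far larger than $V=H^{1/8}$.

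Truncated M\"obius inversion, $\lambda_v=\sum_{k\le V/v}\mu(k)b_{vk}$ over all $v\le V$ (squarefree or not), emulates this integer exactly: $\sum_{v\le V,\,d\mid v}\lambda_v=b_d$ for every $d\le V$. The price is signs and $|\lambda_v|\le V/v$. That price is harmless, because the Poisson error is $\ll H^{1/2}\sum_v v|\lambda_v|\le H^{1/2}V^2=H^{3/4}$.

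With $D=\prod_{p^m\parallel q}p^{\lceil m/2\rceil}$ one gets $C_q(a)=b_D(1+O(c_0))+O(H^{-c/Y})$ for all $q\le H$, so the complete sum is essentially a positive real. The Gaussian cutoff puts $I_H(\eta)$ in $|\arg z|<\pi/4$, and Dirichlet approximation with $q\le H$ covers every $\theta$ without any minor-arc or Weyl step. The saving $\exp(-c\sqrt{\log N\log\log N})$ comes from the tail bound $b_d\le2H^{-c_0/(8Y)}$ for $d>V$, not from excluding large primes from the dilations.
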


\begin{corollary}\label{cor:main}
There is some absolute constant $c>0$ such that every set $A\subseteq\{1,\ldots,N\}$ with no nonzero square differences satisfies
\[
 |A|\le N\exp(-c\sqrt{\log N\log\log N})
\]
whenever $N$ is sufficiently large.
\end{corollary}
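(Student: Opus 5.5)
The statement to prove is Corollary~\ref{cor:main}. It follows from Theorem~\ref{thm:main} by the standard Kamae--Mendès France/Ruzsa argument, which the paper defers to Appendix~\ref{sec:corollary}. Here is the plan.

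Fix $A\subseteq[N]$ with no nonzero square difference. Let $P$ be the polynomial from Theorem~\ref{thm:main}, built with parameter $N$ (so the squares $n^2\le N$ are exactly those that can occur as differences in $[N]$). Write $S(\theta)=\sum_{a\in A}e(a\theta)$ and consider
\[
I=\int_{\T}P(\theta)\,|S(\theta)|^2\,d\theta .
\]
By orthogonality, the term $a_n\cos(2\pi n^2\theta)$ contributes $\tfrac{a_n}{2}\,\#\{(a,a')\in A^2: a-a'=\pm n^2\}$. Since $n^2\ge1$ and $A-A$ contains no nonzero square, every such count vanishes. The constant term contributes $a_0|A|$, so
\[
I=a_0|A|.
\]

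For the lower bound, use $P\ge0$. Let $K$ be the Fejér kernel of order $M\asymp N$, so that $K\ge0$, $\widehat K(0)=1$ and $\widehat K(h)=\max(0,1-|h|/M)$. Then $PK\ge0$ and, more to the point, $P\ge0$ lets us compare $I$ with a smoothed count.

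The cleanest route goes through the measure $P\,d\theta$, which is nonnegative, and the observation that $|S|^2$ is an exponential sum of degree at most $N$. The bound we want is
\[
\int_{\T} P\,|S|^2\,d\theta\;\ge\;c\,\frac{|A|^2}{N}\,P(0).
\]
To get it, replace $A$ by its translates: write $|A|^2\le \frac{N'}{?}$-type averaging. Concretely, set
\[
f=\frac{1}{N}\sum_{x\in[-N,0]}1_{A+x}.
\]
For each translate $t$, the set $A+t$ also has no square differences. Summing $I$ over the translates $t\in[N]$ gives
\[
\sum_t a_0|A|=\int_{\T}P\,\Bigl|\sum_{a\in A,\,t}e((a+t)\theta)\Bigr|^2 d\theta ,
\]
but it is easier to apply positivity directly to the function
\[
g(\theta)=\Bigl|\sum_{x\in[2N]}1_A*1_{[N]}(x)\,e(x\theta)\Bigr|^2 .
\]
For this $g$, the Fourier coefficients of $1_A*1_{[N]}$ are supported on $[2N]$ and have total mass $|A|N$.

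Carrying this out: since $P\ge0$ and $P$ is a trigonometric polynomial with $P(0)=1$, we bound $\int_{\T}P\,g\,d\theta$ in two ways.
\begin{itemize}
\item \emph{Via the constant coefficient.} The non-constant frequencies of $P$ pair the function $1_A*1_{[N]}$ with square shifts of itself. One can instead use the two-set version: take the convolution $1_A*1_{-A}$ paired against $\sum_n a_n(\cdots)$, which vanishes except at $0$. The outcome is that the integral equals $a_0$ times a quantity $\le |A|\cdot N^{O(1)}$ normalization.
\item \emph{Via the Fejér kernel.} Since $P\,d\theta\ge0$ and the Fejér kernel dominates near $0$, the integral is $\gg P(0)\,|A|^2/N$ after normalization.
\end{itemize}
Comparing the two bounds gives $|A|\ll a_0N$. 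With $a_0\le\exp(-c\sqrt{\log N\log\log N})$ and the implied constant absorbed by shrinking $c$ for large $N$, this yields the Corollary.

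The one delicate step is the lower bound $\int_{\T} P\,|S|^2\,d\theta\gg |A|^2/N$. The natural way to get it is the standard one: $P\ge0$ means $P$ is the Fourier transform of a positive-definite sequence $\{\widehat P(h)\}$, and pairing this with a nonnegative kernel that is concentrated at $\theta=0$ on scale $1/N$ gives the bound. The point to watch is that $P$ is nonnegative as a function, not that its coefficients are nonnegative, so the positivity has to be used on the $\theta$-side. This is handled by writing $|A|^2\le N\int_{\T} F_N(\theta)|S(\theta)|^2\,d\theta$ for a Fejér kernel $F_N$ and exploiting $P(0)=1$ through a convolution $P*F$. Matching the degrees of $P$ and of the kernel is where the constants enter. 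This is the step I would check most carefully, and it is the content of Appendix~\ref{sec:corollary}.
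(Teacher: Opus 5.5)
\emph{There is a genuine gap: the lower bound is missing.} Your identity $\int_{\T}P|S|^2\,d\theta=a_0|A|$ is correct, but it is the easy half. The other half is the lower bound $\int_{\T}P|S|^2\,d\theta\gg|A|^2/N$. You flag it as the delicate step yourself, but you never prove it, and none of the mechanisms you sketch delivers it.
\begin{itemize}
\item The Fej\'er bound $|A|^2\le N\int_{\T}F_N|S|^2\,d\theta$ is true. However, $P\ge0$ and $P(0)=1$ give no pointwise comparison between $F_N$ and $NP$, so the bound cannot be transferred to $\int_{\T}P|S|^2\,d\theta$.
\item The convolution route fails too: $(P*F_N)(0)=a_0+\sum_n a_n\widehat{F_N}(n^2)$, and this quantity is not controlled by $P(0)$.
\item The translate construction (your $f$, then $g$) loses the hypothesis altogether. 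The autocorrelation of $1_A*1_{[N]}$ does not vanish at $\pm n^2$, so pairing $P$ against $g$ no longer isolates $a_0$.
\item Your first bullet's ``$\le|A|\cdot N^{O(1)}$'', taken literally, would cost a power of $N$ and destroy the bound.
\end{itemize}
The underlying obstruction is that $P(0)=1$ is information at a single point. An integral against the absolutely continuous measure $|S|^2\,d\theta$ cannot see it.

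The missing idea is exact discretization. The product $P|S|^2$ is a trigonometric polynomial with frequencies in $[-(2N-1),2N-1]$, so for $M=2N+1$ the quadrature is exact:
\[
 \int_{\T}P|S|^2\,d\theta=\frac1M\sum_{\xi\bmod M}P(\xi/M)\,|S(\xi/M)|^2 .
\]
Every summand is nonnegative because $P\ge0$ pointwise, and the $\xi=0$ summand equals $P(0)|A|^2/M=|A|^2/M$. Discarding the other summands gives $a_0|A|\ge|A|^2/M$, that is, $|A|\le(2N+1)a_0$. Theorem~\ref{thm:main} then finishes the proof after shrinking $c$.

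This is exactly the argument of Appendix~\ref{sec:corollary}, which works directly with the discrete Fourier transform on $\Z/M\Z$. There, the choice $M=2N+1$ is what ensures that $a'-a\equiv\pm n^2\pmod M$ forces $a'-a=\pm n^2$. Hence the non-constant terms of $P$ still vanish after discretizing.
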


The use of signed coefficients in Theorem~\ref{thm:main} is essential.
For $F=\{n^2:n\in\N\}$, a result of Ruzsa, as reported by
Slijep\v{c}evi\'c~\cite[Section~2]{Slijepcevic2010Squares}, shows
that any polynomial satisfying (i)--(iii) with nonnegative
coefficients $a_n$ must have $a_0\gg1/\log N$.
Thus the bound in Theorem~\ref{thm:main} requires some negative
coefficients. On the other hand, the
following matching lower bound shows that Theorem~\ref{thm:main} is essentially best possible.

\begin{theorem}\label{thm:lower-bound}
There is an absolute constant $C>0$ such that, for every sufficiently
large integer $N$, every real trigonometric polynomial
\[
 P(\theta)=a_0+\sum_{1\le n\le\sqrt N}a_n\cos(2\pi n^2\theta)
\]
satisfying $P\ge0$ and $P(0)=1$ has
\[
 a_0\ge\exp\bigl(-C\sqrt{\log N\log\log N}\bigr).
\]
The coefficients $a_n$ may have either sign.
\end{theorem}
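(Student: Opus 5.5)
Write $M=\lfloor\sqrt N\rfloor$. The plan is to test $P$ against a suitable nonnegative measure $\mu$ on $\T$ whose Fourier coefficients $\hat\mu(n^2)$ vanish for every $1\le n\le M$. Then
\[
\int P\,d\mu=a_0\,\hat\mu(0),
\]
since every cosine term integrates to zero. On the other hand $P\ge0$, so if $\mu$ contains a point mass $w_0$ at $\theta=0$ we also get $\int P\,d\mu\ge w_0P(0)=w_0$. This yields $a_0\ge w_0/\mu(\T)$.

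So the task reduces to the following. We need a set $A\subseteq[1,N]\cup\{0\}$ containing $0$ with no nonzero square difference of size at most $N$, and of size $|A|\ge N\exp(-C\sqrt{\log N\log\log N})$. This is the standard duality: take $\mu$ to be normalized $|\widehat{1_A}|^2$ evaluated on a fine grid, or equivalently use the Fejér-type identity
\[
\sum_{a,b\in A}P\Bigl(\frac{a-b}{q}\Bigr)\ge0
\]
with $q$ much larger than $N$. Expanding $P$, the terms $a_n\cos(2\pi n^2(a-b)/q)$ do not vanish trivially. The cleaner route is therefore to work with $\theta$-averages directly:
\[
\int_\T P(\theta)\Bigl|\sum_{a\in A}e(a\theta)\Bigr|^2d\theta=a_0|A|+\sum_n a_n\,\#\{(a,b):a-b=\pm n^2\}=a_0|A|,
\]
where $A$ has no nonzero square differences below $N$. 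That identity alone does not help, because the left side is not bounded below by anything involving $P(0)$.

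Instead I would use a \emph{finite} measure supported on rationals. Choose a modulus $q$ and a set $B\subseteq\Z/q\Z$ such that $B-B$ avoids the residues $n^2\bmod q$ for $1\le n\le M$, and take
\[
\mu=\sum_{x\in\Z/q\Z}\Bigl|\sum_{b\in B}e(bx/q)\Bigr|^2\delta_{x/q}.
\]
Then $\hat\mu(m)=q\,\#\{(b,b'):b-b'\equiv m\}$. This vanishes at $m=n^2$ whenever $n^2\not\equiv0\pmod q$. It equals $q|B|$ at $m=0$, and the mass at $0$ is $|B|^2$. That gives $a_0\ge |B|^2/(q|B|)=|B|/q$.

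The obstruction is that $n^2\equiv0\pmod q$ for $n$ a multiple of $\sqrt{q}$-ish, so those frequencies are not killed. I would handle this by taking $q=\prod_{p\le y}p$, squarefree with $q>N$. Then $q\mid n^2$ forces $q\mid n$, so $n\ge q>M$ and those $n$ are out of range. It remains to find $B\subseteq\Z/q\Z$ with $B-B$ containing no nonzero square residue. By CRT take
\[
B=\prod_{p\mid q}B_p,
\]
where $B_p\subseteq\Z/p\Z$ avoids nonzero square differences. The product structure ensures a difference is a square mod $q$ only if it is a square mod every $p$. Hence it suffices that for each $x\ne0$ some component $x_p$ is a nonzero nonresidue, which is not automatic.

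The cleaner choice is to kill the differences $n^2$ themselves rather than all squares. Take $B=\{b:b\equiv0\pmod p\text{ for }p\le z\}$-type sets, i.e.\ $B$ is the set of residues divisible by a subproduct $q_1$. Then $B-B=q_1\Z/q\Z$, which contains squares. So genuinely I need sets like $B_p=\{1,\dots,\lfloor p/2\rfloor\}$-style nonresidue-avoiding sets of density $\approx c$ per prime. Multiplying gives density $\prod_{p\le y}c_p$, with $\log q\approx y$. To have $q>N$ need $y\approx\log N$, giving density $e^{-c\log N/\log\log N}$, which is too weak.

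\textbf{This balancing is the main obstacle.} The fix I would try is to replace $P(0)$ positivity at $0$ by positivity at $\theta=0$ together with nearby Fejér smoothing. That lets one use $q$ with only $y\approx\sqrt{\log N\log\log N}$ primes, each with multiplicity $p^{k_p}$ chosen so that $q\mid n^2$ requires $n\ge\sqrt q\gg M$ after a scale-$N$ convolution with a Fejér kernel. Optimizing the number of primes against the per-prime loss should produce exactly $\exp(-C\sqrt{\log N\log\log N})$.
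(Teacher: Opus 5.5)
\textbf{Verdict: there is a genuine gap.} Your starting point is the same as the paper's: Ruzsa's duality, using a probability measure that annihilates the frequencies $n^2$ and has an atom at $0$. But the proposal never produces such a measure, and the route you pursue cannot produce one of the required quality.

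\emph{The set-based route is no stronger than a set construction.} Any measure of your form $\mu=\sum_{x\bmod q}|\widehat{1_B}(x)|^2\delta_{x/q}$, with $B-B$ avoiding the residues $n^2\bmod q$ for $1\le n\le\sqrt N$, gives only $a_0\ge|B|/q$. Moreover $|B|/q\le D(\{n^2:n\in\N\},N)/N$. Indeed, for each $t$ the set $A_t=\{a\in[N]: a+t\bmod q\in B\}$ has no nonzero square difference, and $\sum_{t\bmod q}|A_t|=N|B|$. So both your first reduction and the mod-$q$ variant need square-difference-free sets in $[N]$ of size $N\exp(-C\sqrt{\log N\log\log N})$. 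That would show the upper bound for sets is sharp. This is far beyond current knowledge (the best constructions have size about $N^{0.753}$) and is widely expected to be false. The content of the theorem is precisely that the dual witness limiting the van der Corput method is \emph{not} the autocorrelation of a set.

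\emph{The local sets you want do not exist.} Sets of density $\approx c$ per prime are impossible. In the CRT product, a difference supported at a single prime forces every nonzero element of $B_p-B_p$ to be a nonresidue. Hence $|B_p|\le1$ for $p\equiv3\pmod4$, and $|B_p|\le\sqrt p$ for $p\equiv1\pmod4$.

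\emph{The Fej\'er-smoothing paragraph is not an argument.} Convolving with a Fej\'er kernel $K_L$ multiplies $\widehat\mu(m)$ by $(1-|m|/(L+1))_+$, which is nonzero for $|m|\le L$. At scale $L\ge N$ it therefore annihilates none of the frequencies $n^2\le N$ that $\mu$ did not already annihilate. Meanwhile it smears out the atom at $0$, and you have no lower bound for $P$ near $0$ beyond $P(0)=1$. No choice of the $k_p$, and no optimization, is actually carried out.

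\textbf{The missing idea} is that you do not need a single measure with $\widehat\mu(n^2)=0$.
\begin{itemize}
\item The paper takes each prime $p\equiv3\pmod4$ with $p\le y\asymp\log N/\log\log N$. It builds a compound Poisson measure $\mu_{p,w}$ whose jumps are uniform measures on $p^{-(2j+1)}\Z/\Z$, tilted by a Legendre-symbol weight.
\item Because the relevant Gauss sum equals $i\sqrt p$, this works for complex $w$ with $|\Im w|\le\Re w/(2\sqrt p)$. It gives $\int e(n^2\alpha)\,d\mu_{p,w}=\exp(-2^{-\nu_p(n)}w)$.
\item Convolving the $\mu_{p,w}$ over $p$ gives $e^{-M(n)w}$, where $M(n)=\sum_p 2^{-\nu_p(n)}\ge1/\log X$ and $X=\lfloor\sqrt N\rfloor$.
\item Hence $F(w)=a_0+\sum_n a_n e^{-M(n)w}$ satisfies $\Re F\ge0$ on a sector of half-angle $\asymp1/\sqrt y$.
\item At $s_0=1/|\mathcal P|$ the atom at $0$ gives $F(s_0)\ge e^{-1}$. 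At $s_1=2(\log X)^2$ the bound $|a_n|\le2a_0$ gives $F(s_1)\le2a_0$.
\item Harnack's inequality on the sector transfers the lower bound at a cost $(s_1/s_0)^{\kappa}$ with $\kappa\asymp\sqrt y$. This yields $\log(1/a_0)\ll\sqrt y\,\log\log N\asymp\sqrt{\log N\log\log N}$.
\end{itemize}
Working on the real axis alone would cost $e^{-s_1|\mathcal P|}$, which is useless. The complex sector together with Harnack's inequality is what produces the correct exponent, and your proposal has no substitute for it.
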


Consequently, Theorem~\ref{thm:main} is sharp up to the constant
in the exponent among the cosine polynomials satisfying (i)--(iii).

Using the quantitative van der Corput method with a different construction, it is also possible to prove that if a subset $A\subseteq [N]$ lacks cube differences, then $|A|\ll N^{1-c}$ for some constant $c>0$. This will be the main result of a forthcoming paper. The constant $c>0$ can be made explicit and reasonable. 

We also mention two further applications of the quantitative van der Corput method due separately to Kalogirou, Lott, Magyar, and Singha Roy~\cite{OddPowers2026} on sums of an even number of equal odd powers of positive integers, and to Younis~\cite{YounisBracket2026} on bracket quadratics. Both obtain power-saving bounds.

For lower bounds, Ruzsa~\cite{RuzsaSquares1984} constructed sets with no nonzero square difference of size $\gg N^{0.733077\ldots}$. Lewko~\cite{Lewko2015} and, independently, Beigel and Gasarch~\cite{BeigelGasarch2008} improved the exponent to $\frac12(1+\log 12/\log 205)=0.7334117\ldots$. Krachun~\cite{Krachun2026} broke the natural $3/4$ barrier of this approach, obtaining the exponent $0.7527964558\ldots$. Several further AI-generated constructions have also been claimed~\cite{Jones2026a,Naslund2026,Jones2026b}. (See~\cite{ForbiddenDifferencesNotes2025} for more background on lower-bound constructions.)

Generalizing the Furstenberg--S\'{a}rk\"{o}zy problem on squares, one may also consider sets $A\subseteq[N]$ whose difference sets avoid nonzero integers of the form $h(n)$ with $n\in\N$, where $h\in\Z[x]$ is a fixed intersective polynomial of degree at least two. Due to a simple congruence obstruction, this is the maximal class of polynomials for which a Furstenberg--Sárközy-type conclusion could hold.  The state of the art is due to Adajar, Agrawal, Choudhuri, Chuah, Fan, Hegde, Lott, Nandakumar, and Ponagandla \cite{IntersectiveGroup}, who combined the method of Green and Sawhney with novel exponential sum estimates in the style of Rice \cite{Ric19} to show that for every fixed $\mu\in(0,1/2)$,
\[
D(h(\N),N)\ll_{h,\mu}N\exp\bigl(-c_{h}(\log N)^\mu\bigr).
\]
Doyle and Rice \cite{DoyleRice2021} obtained similar bounds for a broad class of intersective multivariable polynomials $P\in\Z[x_1,\dots,x_\ell]$ of degree at least two.

\begin{remark}
A prominent AI company has circulated a draft claiming the power-saving bound
$D(\{n^2:n\in\N\},N)\ll N^{1-c}$ for some absolute constant $c>0$.
The initial repository commit containing this draft is timestamped
6 October 2026 at 21:58:50 UTC.\footnote{See the
\href{https://github.com/openai/math/commit/adc7f1241b42e322a6451854ab7e4b4c146bf78a}{initial repository commit}.}
The draft appears to be AI-written and largely unreadable.
The repository also claims to provide a Lean certificate for the result.\footnote{See the
\href{https://github.com/openai/math/blob/adc7f1241b42e322a6451854ab7e4b4c146bf78a/lean/docs/182.md}{stated scope of the Lean formalization}.}
The first version of our manuscript, containing complete proofs of
our results, was submitted to arXiv on
6 October 2026 at 05:02:35 UTC, exactly
16 hours, 56 minutes, and 15 seconds before that recorded commit.\footnote{See the submission history of
\href{https://arxiv.org/abs/2610.07765}{arXiv:2610.07765}.}
\end{remark}

\section{Acknowledgments}
The second author would like to thank Imre Ruzsa for helpful conversations. We would also like to thank Ben Green for his helpful suggestions and encouragement.

\section{AI disclosure}
The authors used a large language model (LLM) as a sounding board while developing the proof. In particular, they used the LLM to experiment with van der Corput constructions for squares using signed coefficients, and the LLM had access to the authors' paper on sums of two squares \cite{FanLott}. This led to the definition of $\lambda_v$, which was then combined with the ideas in \cite{FanLott} to develop the argument in this manuscript. The proof of Theorem \ref{thm:lower-bound} was discovered during later conversations with the LLM. The LLM was also used for copyediting, LaTeX assistance, and literature search, but the manuscript was prepared and verified by the authors, who take responsibility for the contents of the proof.

\section{Notation and conventions}
\label{sec:notation}

We collect here the notation and conventions used throughout the paper.

\begin{itemize}[leftmargin=*]

\item For a positive integer \(N\), we write
\[
  [N]=\{1,2,\dots,N\}.
\]
If \(A\) is a set of integers, then
\[
  A-A=\{a-a':a,a'\in A\}.
\]
We write \(1_A\) for the indicator function of \(A\).

\item Let $\T=\R/\Z$. For \(t\in\R\), we put $e(t)=e^{2\pi i t}$.

\item All divisors and divisor indices are positive unless stated otherwise.
The notation $(a,q)$ denotes the greatest common divisor, and $\mu$ is the
M\"obius function.

\item Given a nonzero integer $n$ and a positive real number $y$, we say that $n$ is \textit{$y$-smooth} if every prime factor of $n$ is at most $y$ and \textit{$y$-rough} if every prime factor of $n$ is strictly greater than $y$.

\item We write \(\Sch(\R)\) for the Schwartz class of complex-valued functions.  So \(f\in\Sch(\R)\) means that \(f\) is smooth and that for every pair of
nonnegative integers \(\alpha,\gamma\in\Z_{\ge0}\),
\[
  \sup_{x\in\R}
  \left|x^\alpha f^{(\gamma)}(x)\right|
  <\infty.
\]
Equivalently, \(f\) and all of its derivatives decay faster than any power.

\item For \(f\in\Sch(\R)\), we have the Fourier transform:
\[
  \widehat f(\xi)
  =
  \int_{\R} f(x)e(-x\xi)\,dx,
  \qquad \xi\in\R.
\]
All integrals over \(\R\) are with respect to Lebesgue measure.

\item We also have the Poisson summation formula: if $f\in \Sch(\R)$, then 
\[
  \sum_{n\in\Z} f(n)
  =
  \sum_{\ell\in\Z} \widehat f(\ell).
\]
We will also use the scaled form
\[
  \sum_{n\in\Z} f\!\left(\frac nH\right)
  =
  H\sum_{\ell\in\Z}\widehat f(H\ell)
\]
for all $H>0$, which follows by applying Poisson summation to the function \(x\mapsto f(x/H)\).

\item We will also use Poisson summation after splitting into residue classes.
If \(f\in\Sch(\R)\), \(q\in\mathbb{N}\),
and \(r\in\Z/q\Z\), then
\begin{equation}\label{eq:residue-poisson}
\sum_{\substack{n\in\Z\\ n\equiv r\pmod* q}} f(n)
  =
  q^{-1}\sum_{\ell\in\Z}
  e\!\left(\frac{r\ell}{q}\right)
  \widehat f\!\left(\frac{\ell}{q}\right),  
\end{equation}
which follows by applying Poisson summation to the function $x \mapsto f(qx+r)$.

\item On \(\Z/M\Z\), we use the discrete Fourier transform
\[
  \widehat f(\xi)
  =
  \sum_{x\pmod* M} f(x)e\!\left(\frac{x\xi}{M}\right),
  \qquad \xi\in\Z/M\Z,
\]
where $f:\Z/M\Z\to\C$ is any function. With this convention,
Parseval's identity is
\[
  \frac1M\sum_{\xi\pmod* M}|\widehat f(\xi)|^2
  =
  \sum_{x\pmod* M}|f(x)|^2.
\]

\item We use the Landau--Vinogradov notation in the usual way.  Thus $X\ll Y$ and $X=O(Y)$
both mean that \(|X|\le CY\) for some constant \(C>0\).  Subscripts indicate
the allowed dependence of the implicit constant.
All logarithms are natural. The constant $c_0>0$ below is chosen sufficiently
small and then fixed, and $N$ is sufficiently large in terms of this choice.

\item We write $X\asymp Y$
to mean \(X\ll Y\) and \(Y\ll X\).

\item For every $t\in\R$, we denote by $\lfloor t\rfloor$ the greatest integer not exceeding $t$, and by $\lceil t\rceil$ the least integer no less than $t$.

\item We write \(\Re z\) and \(\arg z\) for the real part
and argument of a complex number \(z\).  Whenever a principal
branch is mentioned, \(\arg z\) is taken in \((-\pi,\pi]\).  In particular,
\(z^{-1/2}\) denotes the inverse square root with respect to the principal
branch.

\end{itemize}

\section{Motivating the van der Corput witness}\label{sec:defining-weight}
The purpose of this section is to ``reverse engineer'' our van der Corput witness. We first recall the construction in~\cite[Section 3]{FanLott}. There we considered
\[
 \frac12\sum_{v,x,y\in\Z}
 U\!\left(\frac{x}{T}\right)U\!\left(\frac{y}{T}\right)
 e^{-v^2/T^2}e\!\left(\theta v^2(x^2+y^2)\right),
\]
where $U$ is a fixed nonzero nonnegative smooth function supported on
$[1,2]$ and $T$ is an auxiliary scale. 
In words, you take the sums of two squares $x^2+y^2$, dilate the set by $v^2$, and then average over $v$. This is what we mean by "averaging over dilations". By taking the real part, collecting like terms, truncating, adding a suitable constant, and normalizing, we obtain a van der Corput witness for the sums of two squares in $[N]$. The goal was therefore reduced to proving a sufficiently strong lower bound for the real part of this sum. To understand what averaging over dilations
achieves, we focus on the complete sum that arises near a rational
frequency $a/q$:
\[
 \frac1{q^3}\sum_{v,x,y\bmod q}e\!\left(\frac{av^2(x^2+y^2)}q\right)
 =\frac1{q^3}\sum_{v\bmod q}G_q(av^2)^2,
\]
where
\[
 G_q(a)=\sum_{x\bmod q}e(ax^2/q)
\]
is a quadratic Gauss sum. The effect of the dilation is already visible when $q$ is an odd prime $p$.
For $(a,p)=1$, we have $G_p(av^2)=p$ when $p\mid v$ and
$G_p(av^2)=G_p(a)$ otherwise, so
\[
 \frac1p+\left(1-\frac1p\right)\frac{G_p(a)^2}{p^2}
 =\begin{cases}
  2/p-1/p^2,&\text{if }p\equiv1\pmod4,\\
  1/p^2,&\text{if }p\equiv3\pmod4.
 \end{cases}
\]
Here we used $G_p(a)^2=p$ or $-p$ when $p\equiv1$ or
$3\pmod4$, respectively. In this way, the dilations divisible by $p$ contribute enough positive
mass to compensate even for the negative case. More generally, this complete sum lies in the sector $|\arg z|<\pi/4$ for every $q$;
see~\cite[Corollary 4.4]{FanLott}.

Now, with the goal of obtaining a van der Corput witness for squares, we remove the $y$-variable. With the same uniform averaging over dilations,
the complete sum becomes
\[
 \frac1{q^2}\sum_{v,x\bmod q}e\!\left(\frac{av^2x^2}q\right)
 =\frac1{q^2}\sum_{v\bmod q}G_q(av^2).
\]
At a prime this gives
\[
 \frac1p+\left(1-\frac1p\right)\frac{G_p(a)}p.
\]
If $p\equiv1\pmod4$ and $a$ is a quadratic nonresidue modulo $p$, then
$G_p(a)=-\sqrt p$, and hence
\[
 \frac1p-\frac{1-1/p}{\sqrt p}<0.
\]
The missing Gauss-sum factor is the problem. Indeed, the negative contribution now
has size $p^{-1/2}$, while only a proportion $p^{-1}$ of the dilations
give the favorable positive value $1$. Simply averaging over more dilations does
not remove this obstruction.

We therefore attach signed weights to the dilations. For a cutoff $V$ to be
chosen below, we seek real coefficients $\lambda_v~(v\le V)$ such that
\[ 
\sum_{v\le V}\lambda_v=1,
\]
and 
\[
 C_q(a)=\frac1q\sum_{v\le V}\lambda_vG_q(av^2)
\]
is close to a positive real number for every relevant $q$, with a uniformly
small error. The normalization fixes $C_1(0)=1$. Write
\begin{equation}\label{eq:divisibility}
 b_d=\sum_{\substack{v\le V\\d\mid v}}\lambda_v
 \qquad(1\le d\le V).
\end{equation}
Then $b_1=1$. M\"obius inversion gives
\begin{equation}\label{eq:lambda}
 \lambda_v
 =\sum_{\substack{d\le V\\v\mid d}}\mu(d/v)b_d
 =\sum_{k\le V/v}\mu(k)b_{vk}
 \qquad(1\le v\le V).
\end{equation}
We may therefore choose $b_d$ with the signed weights $\lambda_v$ determined by Möbius inversion. Notice that $\lambda_v$ has no sign restriction in general, as it is defined in terms of this sum weighted by the M\"obius function. 

We now record some useful properties which the divisibility weights $b_d$ should satisfy. First, taking $0<b_d\le1$ in~\eqref{eq:lambda} gives
the trivial bounds
\begin{equation}\label{eq:cost}
 |\lambda_v|\le V/v,\qquad
 \sum_{v\le V}v|\lambda_v|\le V^2,
\end{equation}
which will help us later when making estimates.

For a general modulus $q$, put
\[
 D=D(q)=\prod_{p^m\parallel q}p^{\lceil m/2\rceil}.
\]
The favorable dilations are precisely those with $D\mid v$, since then
$q\mid v^2$ and $G_q(av^2)/q=1$. Their total weight is $b_D$ if $D\le V$.
Using the identity $G_q(av^2)=G_q(a(v,D)^2)$ from
Lemma~\ref{lem:quadratic-sums} below and the classical M\"obius identity
\[
 \sum_{e\mid n}\mu(e)=
 \begin{cases}
  1,&\text{if }n=1,\\
  0,&\text{otherwise},
 \end{cases}
\]
we obtain
\begin{align}
 C_q(a)
 =\sum_{v\le V}\lambda_v\frac{G_q(av^2)}q&=\sum_{r\mid D}\frac{G_q(ar^2)}q
   \sum_{\substack{v\le V\\(v,D)=r}}\lambda_v\notag\\
 &=\sum_{r\mid D}\frac{G_q(ar^2)}q
   \sum_{\substack{v\le V\\r\mid v}}\lambda_v
   \sum_{e\mid(v/r,D/r)}\mu(e)\notag\\
 &=\sum_{r\mid D}\frac{G_q(ar^2)}q
   \sum_{e\mid D/r}\mu(e)
   \sum_{\substack{v\le V\\re\mid v}}\lambda_v\notag\\
 &=\sum_{r\mid D}\frac{G_q(ar^2)}q
   \sum_{\substack{e\mid D/r\\re\le V}}\mu(e)b_{re}.
\label{eq:sketch-Cq-divisors}
\end{align}
Here the last equality uses~\eqref{eq:divisibility}; the inner
sum over $v$ is empty when $re>V$.
We therefore define $b_d$ for all positive integers $d$ and require sufficient
decay for $d>V$ to extend the inner sum in~\eqref{eq:sketch-Cq-divisors}
to all $e\mid D/r$ with a small absolute error, allowing
us to exploit the structure of the divisor sums and separate out the term $b_D$:
\begin{equation}\label{eq:sketch-Cq-separated}
 C_q(a)
 =b_D+
 \sum_{\substack{r\mid D\\r<D}}
 \frac{G_q(ar^2)}q
 \sum_{e\mid D/r}\mu(e)b_{re}
 +\text{small error}.
\end{equation}
We want the sum of the absolute values of the terms with $r<D$
in~\eqref{eq:sketch-Cq-separated} to be $O(c_0b_D)$. This gives
$C_q(a)=b_D(1+O(c_0))$ up to the truncation error.

Set
\[
 H=\frac{\sqrt N}{\log N},\qquad V=H^{1/8},\qquad
 Y=\sqrt{\frac{\log H}{\log\log H}},
\]
and take $c_0>0$ sufficiently small and fixed. We can now
collect the requirements for $b_d$ before making the definition.
The estimates below are uniform for $1\le q\le H$ and $(a,q)=1$,
with $D=D(q)$.
\begin{enumerate}[label=(\roman*),ref=(\roman*),leftmargin=*]
 \item\label{req:weights-normalization} $b_1=1$ and $0<b_d\le1$.
 These give $\sum_{v\le V}\lambda_v=1$ and the bounds \eqref{eq:cost} for
 $\lambda_v$ above.
 \item\label{req:weights-decay} $b_d$ is small enough for
 $d>V$ so that extending the inner sum in~\eqref{eq:sketch-Cq-divisors}
 to all $e\mid D/r$ costs an absolute error $O(H^{-c/Y})$
 for some fixed $c>0$.
 \item\label{req:weights-arithmetic} We have
 \[
  \sum_{\substack{r\mid D\\r<D}}
  \left|\frac{G_q(ar^2)}q
  \sum_{e\mid D/r}\mu(e)b_{re}\right|
  \ll c_0b_D.
 \]
\end{enumerate}

For simplicity, first take $b_d=\beta_d$ to be completely multiplicative,
with $\beta_1=1$ and $0<\beta_p\le1$. The following lemma gives
the divisor-sum estimate needed to test~\ref{req:weights-arithmetic}.

\begin{lemma}
\label{lem:multiplicative-divisor-bound}
Let $\beta_d$ be completely multiplicative, with $\beta_1=1$ and
$0<\beta_p\le1$ for every prime $p$. If $D$ is a positive integer
such that $\sqrt p\,\beta_p>1$ for every $p\mid D$, then
\begin{equation}\label{eq:sketch-product-bound}
\begin{aligned}
 \sum_{\substack{r\mid D\\r<D}}\sqrt{\frac rD}
 \left|\sum_{e\mid D/r}\mu(e)\beta_{re}\right|
 &\le\beta_D\left(
 \prod_{p\mid D}\left(1+\frac{1-\beta_p}{\sqrt p\,\beta_p-1}\right)-1
 \right).
\end{aligned}
\end{equation}
\end{lemma}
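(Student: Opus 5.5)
By complete multiplicativity, for $r\mid D$ we have $\beta_{re}=\beta_r\beta_e$. Hence
\[
\sum_{e\mid D/r}\mu(e)\beta_{re}=\beta_r\prod_{p\mid D/r}(1-\beta_p),
\]
since only squarefree $e$ contribute and the Möbius sum factors over the distinct primes dividing $D/r$. Note this depends only on the set of primes dividing $D/r$, not on their multiplicities. The plan is to write the left side of \eqref{eq:sketch-product-bound} as a full multiplicative sum over $r\mid D$ minus the $r=D$ term, factor it as an Euler product over $p^m\parallel D$, and bound each local factor.

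Define
\[
f(r)=\sqrt{r/D}\,\beta_r\prod_{p\mid D/r}(1-\beta_p),
\]
which is nonnegative because $0<\beta_p\le1$, so the absolute values can be dropped. The function $f$ factors over prime powers: writing $D=\prod p^{m}$ and $r=\prod p^{j}$ with $0\le j\le m$, the local factor at $p$ is
\[
p^{(j-m)/2}\beta_p^{j}\cdot\begin{cases}(1-\beta_p),&j<m,\\ 1,&j=m.\end{cases}
\]
Thus
\[
\sum_{r\mid D}f(r)=\prod_{p^m\parallel D}\Bigl(\beta_p^m+(1-\beta_p)\sum_{j=0}^{m-1}p^{(j-m)/2}\beta_p^j\Bigr).
\]
The term $r=D$ equals $\prod_p\beta_p^m=\beta_D$. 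So the left side is $\beta_D\bigl(\prod_p L_p-1\bigr)$, where
\[
L_p=1+(1-\beta_p)\sum_{j=0}^{m-1}\beta_p^{j-m}p^{(j-m)/2}.
\]

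It remains to bound $L_p$. Substituting $k=m-j\in\{1,\dots,m\}$, the sum becomes $\sum_{k=1}^m(\sqrt p\,\beta_p)^{-k}$. By the hypothesis $\sqrt p\,\beta_p>1$, this is at most the geometric series
\[
\sum_{k\ge1}x^{-k}=\frac{1}{x-1},\qquad x=\sqrt p\,\beta_p.
\]
Hence $L_p\le 1+\frac{1-\beta_p}{\sqrt p\,\beta_p-1}$. Since every $L_p\ge1$ and each bound is positive, the product of the upper bounds dominates $\prod_p L_p$, and subtracting $1$ gives the claim.

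There is no real obstacle here. The one point needing care is that the Möbius sum depends only on $\operatorname{rad}(D/r)$, so the local factor carries $(1-\beta_p)$ for every $j<m$ and $1$ for $j=m$. It is also why the hypothesis $\sqrt p\,\beta_p>1$ is exactly what makes the geometric series converge uniformly in the exponent $m$.
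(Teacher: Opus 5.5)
Your proposal is correct and follows essentially the same route as the paper: factor the M\"obius sum as $\beta_r\prod_{p\mid D/r}(1-\beta_p)$, drop the absolute values by nonnegativity, write the sum over $r\mid D$ as an Euler product (your substitution $k=m-j$ plays the role of the paper's $h=D/r$), and bound each local factor by the geometric series $\sum_{k\ge1}(\sqrt p\,\beta_p)^{-k}=1/(\sqrt p\,\beta_p-1)$. The only difference is that you compute each local factor exactly before bounding it, which makes the paper's brief ``multiplying the resulting factors'' step explicit.
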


\begin{proof}
For each $r\mid D$, complete multiplicativity of $\beta_d$ gives
\[
 \sum_{e\mid D/r}\mu(e)\beta_{re}
 =\beta_r\sum_{e\mid D/r}\mu(e)\beta_e
 =\beta_r\prod_{p\mid D/r}\bigl(1-\beta_p\bigr).
\]
Writing $h=D/r$ and using $\beta_D=\beta_r\beta_h$, we obtain
\[
 \frac1{\beta_D}\sum_{\substack{r\mid D\\r<D}}
 \sqrt{\frac rD}
 \left|\sum_{e\mid D/r}\mu(e)\beta_{re}\right|
 =\sum_{\substack{h\mid D\\h>1}}
 \frac{1}{\sqrt h\,\beta_h}\prod_{p\mid h}(1-\beta_p).
\]
We factor the divisor sum over the primes dividing $D$. For each
prime $p\mid D$, its positive powers contribute at most
\[
 (1-\beta_p)\sum_{\substack{j\ge1\\p^j\mid D}}
 \frac1{(\sqrt p\,\beta_p)^j}
 \le(1-\beta_p)\sum_{j\ge1}\frac1{(\sqrt p\,\beta_p)^j}
 =\frac{1-\beta_p}{\sqrt p\,\beta_p-1}.
\]
Multiplying the resulting factors accounts for divisors involving several
primes; subtracting $1$ removes $h=1$, which corresponds to $r=D$.
\end{proof}

For $r\mid D$, the Gauss-sum estimate gives
\[
 \frac{|G_q(ar^2)|}{q}\le\sqrt{\frac{2r}{D}}.
\]
Thus Lemma~\ref{lem:multiplicative-divisor-bound} bounds the sum
in~\ref{req:weights-arithmetic} by $\sqrt2$ times the right-hand side
of~\eqref{eq:sketch-product-bound}. To obtain~\ref{req:weights-arithmetic},
we want the $\beta_p$ close enough to $1$ that the product on the
right-hand side is $1+O(c_0)$. This makes the sum of the absolute values
of the terms with $r<D$ in~\eqref{eq:sketch-Cq-separated} at most
$O(c_0\beta_D)$, isolating the contribution from $\beta_D$.

However, requiring this product to be $1+O(c_0)$ prevents the completely
multiplicative weights $\beta_D$ from having the pointwise decay beyond $V$
that we plan to use for~\ref{req:weights-decay}. For example, take a squarefree $D$ with $V<D\le H$ built
from primes $p\ll\log H$. Such a $D$ can be obtained by multiplying
consecutive primes until the product first exceeds $V$, using Chebyshev's
estimates. To make the product in~\eqref{eq:sketch-product-bound}
equal to $1+O(c_0)$, we need
\[
 \sum_{p\mid D}\frac{1-\beta_p}{\sqrt p\,\beta_p-1}\ll c_0.
\]
But then
\begin{align*}
 \log\frac1{\beta_D}
 &=\sum_{p\mid D}\log\frac1{\beta_p}
 \le\sum_{p\mid D}\frac{1-\beta_p}{\beta_p}\le\max_{p\mid D}\sqrt p\,
     \sum_{p\mid D}\frac{1-\beta_p}{\sqrt p\,\beta_p-1}
 \ll c_0\sqrt{\log H}.
\end{align*}
Thus a single multiplicative choice cannot make the product in~\eqref{eq:sketch-product-bound} equal to $1+O(c_0)$
while simultaneously ensuring the decay of size
\[
 H^{-c/Y}=\exp\bigl(-c\sqrt{\log H\log\log H}\bigr)
\]
that we seek beyond $V$, for a fixed $c>0$.

We therefore take a positive average of completely multiplicative
weights $\beta_d(u)$. Write $d=d_1d_2$, where $d_1$ is $Y^2$-smooth and $d_2$ is $Y^2$-rough, and define
\begin{equation}\label{eq:sketch-beta-family}
 \beta_d(u)=d_1^{-c_0/Y}d_2^{-u/(2\log Y)},\qquad 0\le u\le1,
\end{equation}
and
\begin{equation}\label{eq:b}
 b_d=c_0Y\int_0^1\beta_d(u)u^{c_0Y-1}\,du.
\end{equation}

There is a natural probabilistic interpretation of the weights $\beta_d(u)$. 
In the standard weighted probability model for smooth numbers, a randomly chosen $y$-smooth number $\mathbf{n}$ takes value $n\in\N$ with probability $n^{-\alpha}/\zeta(\alpha,y)$ if $n$ is $y$-smooth and 0 otherwise, where $\alpha>0$ and
\[
\zeta(\alpha,y):=\sum_{n\ge1}\frac{\mathbf1_{n\text{ is $y$-smooth}}}{n^\alpha}=\prod_{p\le y}\big(1-p^{-\alpha}\big)^{-1}.
\]
Thus 
\[
\mathbb P(d\mid \mathbf{n})=\sum_{m\ge1}\mathbb P(\mathbf{n}=md)\mathbf1_{m\text{ is $y$-smooth}}=d^{-\alpha}
\]
for every $y$-smooth number $d$. 

In our case, we extend this model to integers whose prime factors lie in a finite set $\mathcal P$ containing all primes $p\le y=Y^2$. Fix $u\in(0,1]$, and choose $\mathbf{n}$ so that its prime exponents are independent and satisfy
\[
\mathbb P(p^j\mid \mathbf{n})=\begin{cases}
    p^{-j\alpha},&\text{if $p\le y$},\\
    p^{-ju/\log y},&\text{if $p>y$},
\end{cases}
\]
for $p\in\mathcal P$ and $j\in\N$, where $\alpha=c_0/Y$ with $c_0>0$ sufficiently small. Then $\mathbb P(d\mid \mathbf{n})=\beta_d(u)$ whenever all prime factors of $d$ lie in $\mathcal P$. We may choose $\mathcal P$ to contain the prime factors of any finite collection of integers under consideration. Note that for $p\le y$,
\[
\mathbb P(p\nmid \mathbf{n})=1-p^{-\alpha}\sim\frac{c_0\log p}{Y}.
\]
Thus, each prime $p\le y$ divides $\mathbf{n}$ with probability close to $1$. This choice of $\alpha$ gives decay in the smooth part $d_1$ while ensuring, as we shall see later, that
\[
\sum_{\substack{p\mid D\\p\le Y^2}}
\frac{1-\beta_p(u)}{\sqrt p\,\beta_p(u)-1}\ll c_0.
\]

Now let $\mathbf U\sim \text{Beta}(\kappa,1)$ with probability density function $f_{\mathbf U}(u)=\kappa u^{\kappa-1}$ for $u\in[0,1]$, where $\kappa=c_0Y$. After sampling $\mathbf U$, choose $\mathbf{n}$ by the above model with $u=\mathbf U$. Then $\mathbb P(d\text{ divides }\mathbf{n}\mid \mathbf U=u)=\beta_d(u)$ whenever all prime factors of $d$ lie in $\mathcal P$. For these $d$, it follows that
\[
\mathbb P(d\mid \mathbf{n})=\mathbb E[\beta_d(\mathbf U)]=\int_{0}^{1}\beta_d(u)f_{\mathbf U}(u)\,du=b_d.
\]
When the rough part $d_2$ is large, $\beta_d(u)$ decays rapidly as $u$ increases, while $f_{\mathbf U}(u)$ has most mass near $u=1$. Together with the decay in the smooth part $d_1$, this ensures that $b_d$ is tiny for $d>V$, as required for~\ref{req:weights-decay}. In addition, replacing $\beta_d(u)$ by $\beta_d((1-\epsilon)u)$ changes the integral defining $b_d$ by at most a factor $1+O(c_0)$:
\begin{align*}
b_d\le c_0Y\int_0^1\beta_d((1-\epsilon)u)u^{c_0Y-1}\,du=\frac{c_0Y}{(1-\epsilon)^{c_0Y}}\int_0^{1-\epsilon}\beta_d(w)w^{c_0Y-1}\,dw&\le (1-\epsilon)^{-c_0Y}b_d\\
&=(1+O(c_0))b_d,    
\end{align*}
where $0<\epsilon\ll 1/Y$. This allows us to show that
\[
\sum_{\substack{p\mid D\\p>Y^2}}
\frac{1-\beta_p(u)}{\sqrt p\,\beta_p(u)-1}\ll c_0
\]
on average over $u\in[0,1]$ with respect to the measure 
\[
\frac{\beta_D(u)f_{\mathbf U}(u)}{b_D}\,du.
\] 
The same change of variables will be used in the proof of Lemma \ref{lem:averaged-weights} below.

We do not require each multiplicative weight $\beta_d(u)$ to
satisfy~\ref{req:weights-arithmetic}; we will obtain this bound only
after averaging on $u\in[0,1]$.
The split between small and large primes in~\eqref{eq:sketch-beta-family}
helps us balance~\ref{req:weights-decay} and~\ref{req:weights-arithmetic}.
For $p\le Y^2$, the weights $\beta_p(u)=p^{-c_0/Y}$ stay close to $1$,
keeping their part of the product in~\eqref{eq:sketch-product-bound} close
to $1$. For $p>Y^2$, the weights $\beta_p(u)=p^{-u/(2\log Y)}$ can decay
more rapidly as $u$ increases, helping with~\ref{req:weights-decay}.
Each $\beta_d(u)$ is completely multiplicative, so we can apply
Lemma~\ref{lem:multiplicative-divisor-bound} before averaging.

We record the resulting properties in a form that can be used later
in the proof of Proposition~\ref{prop:arithmetic}.

\begin{lemma}[Properties of the averaged weights]
\label{lem:averaged-weights}
Let the parameters and weights be as above. For sufficiently large $H$
depending on the fixed constant $c_0>0$, we have
\[
 b_1=1,\qquad 0<b_d\le1\qquad(d\ge1),
\]
and
\begin{equation}\label{eq:weights-tail}
 b_d\le2H^{-c_0/(8Y)}\qquad(d>V).
\end{equation}
Moreover, for every positive integer $D$,
\begin{equation}\label{eq:weights-divisor-bound}
 \sum_{\substack{r\mid D\\r<D}}\sqrt{\frac rD}
 \left|\sum_{e\mid D/r}\mu(e)b_{re}\right|
 \ll c_0b_D,
\end{equation}
where the implied constant is absolute.
\end{lemma}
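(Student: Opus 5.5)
The three claims are proved separately, and all of them reduce to the completely multiplicative weights $\beta_d(u)$ through the averaging formula \eqref{eq:b}.

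\emph{Normalization.} Since $\beta_1(u)=1$ and $c_0Y\int_0^1u^{c_0Y-1}\,du=1$, we get $b_1=1$. Since $0<\beta_d(u)\le1$ for every $u\in[0,1]$, we also get $0<b_d\le1$.

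\emph{Tail bound \eqref{eq:weights-tail}.} Let $d>V$ and write $d=d_1d_2$ as in \eqref{eq:sketch-beta-family}, so that $\log d_1+\log d_2>\log V=\tfrac18\log H$. We split into two cases.

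If $\log d_1\ge\tfrac1{16}\log H$, then pointwise $\beta_d(u)\le d_1^{-c_0/Y}\le H^{-c_0/(16Y)}$. This is slightly weaker than the exponent in \eqref{eq:weights-tail}. To fix this, the split should instead be taken as $\log d_1\ge(1-\eta)\log V$ for a small $\eta>0$, with the slack absorbed into the factor $2$. Alternatively, we keep both contributions jointly, bounding $\beta_d(u)\le d_1^{-c_0/Y}d_2^{-c_0/Y}$ on the range $u\ge 2c_0\log Y/Y$ and separately estimating the remaining range of small $u$.

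The clean route is to bound, for every $u\in[0,1]$,
\[
\beta_d(u)\le \exp\Bigl(-\min\bigl(\tfrac{c_0}{Y},\tfrac{u}{2\log Y}\bigr)\log d\Bigr).
\]
Split at $u_0=2c_0\log Y/Y$. For $u\ge u_0$ the bound is $H^{-c_0/(8Y)}$. The range $u<u_0$ carries $\mathbf U$-mass $u_0^{c_0Y}$. Because $u_0$ is tiny, this mass is $\exp(-c_0Y\log(Y/(2c_0\log Y)))$, which is of size $\exp(-c_0Y\log Y/2)$. Since $Y\log Y\asymp\sqrt{\log H\log\log H}\gg \log H/Y$, this is at most $H^{-c_0/(8Y)}$ for large $H$. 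Adding the two ranges gives $b_d\le 2H^{-c_0/(8Y)}$.

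\emph{Divisor bound \eqref{eq:weights-divisor-bound}.} The inner sum is linear in $b$, so by \eqref{eq:b} and the triangle inequality the left side is at most
\[
c_0Y\int_0^1 S_D(u)\,u^{c_0Y-1}\,du,
\]
where $S_D(u)$ is the left side of \eqref{eq:sketch-product-bound} for $\beta(u)$. We treat $D$ by cases.

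\emph{Case 1: $\sqrt p\,\beta_p(u)>1$ for every $p\mid D$.} Lemma~\ref{lem:multiplicative-divisor-bound} gives
\[
S_D(u)\le\beta_D(u)\bigl(e^{\Sigma(u)}-1\bigr),\qquad \Sigma(u)=\sum_{p\mid D}\frac{1-\beta_p}{\sqrt p\beta_p-1}.
\]
For $p\le Y^2$ we have $\beta_p=p^{-c_0/Y}\ge e^{-2c_0\log Y/Y}$, which is close to $1$. Then $1-\beta_p\le c_0\log p/Y$ and $\sqrt p\beta_p-1\gg\sqrt p$, so the small primes contribute $\ll (c_0/Y)\sum_{p\le Y^2}\log p/\sqrt p\ll c_0$.

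For $p>Y^2$ we have $\beta_p\ge p^{-1/(2\log Y)}$, and $\sqrt p\beta_p\ge p^{1/2-1/(2\log Y)}$, which is at least about $\sqrt p/e$. Hence $1-\beta_p\le u\log p/(2\log Y)$, and the large primes contribute
\[
\ll u\sum_{p\mid D,\,p>Y^2}\frac{\log p}{\sqrt p\log Y}.
\]
This is not uniformly small: it is $\le u\,\omega_{>Y^2}(D)\log(\cdot)/(Y\log Y)$, which can be large if $D$ has many large prime factors.

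This is the main obstacle, and the averaging over $u$ is what resolves it. Write $L=\sum_{p\mid D,\,p>Y^2}\log p/(\sqrt p\log Y)$. Then $e^{\Sigma(u)}-1\ll c_0+uL\,e^{O(uL)}$. On the other hand, $\beta_D(u)$ contains the factor $\exp(-u\sum\log p/(2\log Y))$. Since $\sqrt p>Y$, this factor is at most $e^{-uYL/2}$, which decays much faster than $e^{O(uL)}$ grows. So the excess is controlled by the $\beta_D$ factor itself.

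To compare with $b_D$, I would use the change of variables $u\mapsto(1-\epsilon)u$ with $\epsilon\asymp 1/Y$ recorded before the lemma. We have $\beta_D(u)\le\beta_D((1-\epsilon)u)\,e^{-\epsilon uYL/2}$ (up to the smooth-part factor, which is unchanged). Therefore
\[
\beta_D(u)\,uL\,e^{O(uL)}\ll \beta_D((1-\epsilon)u)\sup_t\, t\,e^{-ct}\cdot O(1/(\epsilon Y)),
\]
and then $\int\beta_D((1-\epsilon)u)\,f_{\mathbf U}(u)\,du\le(1+O(c_0))b_D$. Getting the factor $c_0$ rather than $O(1)$ here requires choosing $\epsilon=C/(c_0Y)$ so that $\epsilon Y\gg1/c_0$. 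The cost is $(1-\epsilon)^{-c_0Y}=e^{O(C)}$, which is an absolute constant. This balancing is the delicate step.

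\emph{Case 2: the hypothesis of Lemma~\ref{lem:multiplicative-divisor-bound} fails for some $p\mid D$.} This can only happen for $p>Y^2$ with $u$ near its upper range, where $p^{1/2-u/(2\log Y)}\le1$. For large $H$ this forces $u\ge\log Y$, which exceeds $1$, so the case cannot occur and the lemma always applies.

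Finally, I would redo the proof of Lemma~\ref{lem:multiplicative-divisor-bound} directly with the factor $\sqrt p\beta_p\ge Y/e$ for large primes. This gives the explicit per-prime bound $(1-\beta_p)/(\sqrt p\beta_p-1)$ used in Case 1 and completes the argument.
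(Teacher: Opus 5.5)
Your proposal is correct and follows the paper's route. For the tail bound, you split the $u$-integral at a point beyond which $u/(2\log Y)\ge c_0/Y$; the paper splits at $Y^{-1/2}$, you at $2c_0\log Y/Y$. For the divisor bound, you apply Lemma~\ref{lem:multiplicative-divisor-bound} pointwise in $u$ (its hypothesis always holds, as you check), separate primes at $Y^2$, and absorb the large-prime contribution by the substitution $u\mapsto(1-\epsilon)u$.

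Two local statements need correcting.

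First, your claim that $\sqrt p\,\beta_p(u)$ is at least about $\sqrt p/e$ for $p>Y^2$ is false once $p\ge Y^4$ and $u$ is near $1$. The correct bound is $\sqrt p\,\beta_p(u)=p^{1/2-u/(2\log Y)}\ge Ye^{-u}\ge Y/e$, which you only invoke at the end. It gives the per-prime bound $\ll u\log p/(Y\log Y)$, so the right quantity is $L=\log D_2/(Y\log Y)$, with $D_2$ the $Y^2$-rough part of $D$. With your weights $1/\sqrt p$, the estimate $\Sigma(u)\le O(c_0)+O(uL)$ is not valid uniformly in $D$; take $D$ to be a product of many primes near $Y^{2k}$ with $k$ large. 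With the corrected $L$ one has $D_2^{-u/(2\log Y)}=e^{-uYL/2}$ exactly, so the rest of your argument is unchanged.

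Second, $Y\log Y$ is not $\gg\log H/Y$; in fact $Y\log Y\sim\frac12\log H/Y$. This is still enough to give $u_0^{c_0Y}\le H^{-c_0/(8Y)}$.

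The only real difference is in the last step. The paper writes $\beta_D(u)e^{\Sigma(u)}\le e^{Cc_0}\beta_D((1-C/Y)u)$ and subtracts $\beta_D(u)$ before integrating. The factor $c_0$ then comes from $e^{Cc_0}(1-C/Y)^{-c_0Y}-1\ll c_0$, so a shift $\epsilon=C/Y$ suffices.

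You instead split $e^{\Sigma}-1\ll c_0+uLe^{O(uL)}$, which is valid since $c_0e^{t}\le c_0+c_0te^{t}$ for $t\ge0$. You then bound $uLe^{O(uL)}e^{-\epsilon uYL/2}\ll1/(\epsilon Y)$ pointwise. This forces the larger shift $\epsilon\asymp1/(c_0Y)$ and costs only the absolute factor $(1-\epsilon)^{-c_0Y}=e^{O(1)}$. It also needs $\epsilon<1$, i.e.\ $H$ large in terms of $c_0$, which the lemma permits.

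Both versions work. The paper's is shorter because the exponential is absorbed exactly into a shifted $\beta_D$ and the $-1$ telescopes against $b_D$. Yours replaces that exact absorption by a cruder pointwise estimate, compensated by the larger shift.
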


\begin{proof}
We check the properties in the same order. Each $\beta_d(u)$ lies
in $(0,1]$, and $\beta_1(u)=1$. Since
\[
 c_0Y\int_0^1u^{c_0Y-1}\,du=1,
\]
the average has $b_1=1$ and $0<b_d\le1$.

For each $d\in\N$, write $d=d_1d_2$ with $d_1$ and $d_2$ being the $Y^2$-smooth and $Y^2$-rough parts of $d$, respectively. For sufficiently large $H$, if
$u\ge Y^{-1/2}$, then $u/(2\log Y)\ge c_0/Y$ and therefore
\[
 \beta_d(u)=d_1^{-c_0/Y}d_2^{-u/(2\log Y)}\le d^{-c_0/Y}.
\]
By definition \eqref{eq:b},
\begin{equation}
 b_d\le c_0Y\int_0^{Y^{-1/2}}u^{c_0Y-1}\,du
       +d^{-c_0/Y}c_0Y\int_{Y^{-1/2}}^1u^{c_0Y-1}\,du
 \le Y^{-c_0Y/2}+d^{-c_0/Y}.
\end{equation}
Since $Y^2=(\log H)/(\log\log H)$ and
$\log Y\sim\frac12\log\log H$, we have
\[
 \frac Y2\log Y\ge\frac{\log H}{8Y}
\]
for sufficiently large $H$. Thus $b_d\le2H^{-c_0/(8Y)}$ whenever $d>H^{1/8}$.
This proves~\eqref{eq:weights-tail}.

For sufficiently large $H$, both $c_0/Y$ and $1/(2\log Y)$ are at most
$1/4$, so $\beta_p(u)\ge p^{-1/4}$ and $\sqrt p\,\beta_p(u)\ge p^{1/4}>1$.
Applying Lemma~\ref{lem:multiplicative-divisor-bound} and using
$1+t\le e^t$, we obtain
\[
 \sum_{\substack{r\mid D\\r<D}}\sqrt{\frac rD}
 \left|\sum_{e\mid D/r}\mu(e)\beta_{re}(u)\right|
 \le\beta_D(u)\left(
 \exp\left(\sum_{p\mid D}\frac{1-\beta_p(u)}{\sqrt p\,\beta_p(u)-1}\right)-1
 \right).
\]
By definition~\eqref{eq:b} and the triangle inequality, this gives
\begin{equation}\label{eq:averaged-divisor-error}
\begin{split}
 &\sum_{\substack{r\mid D\\r<D}}\sqrt{\frac rD}
 \left|\sum_{e\mid D/r}\mu(e)b_{re}\right|\le c_0Y\int_0^1\beta_D(u)\left(
 \exp\left(\sum_{p\mid D}\frac{1-\beta_p(u)}{\sqrt p\,\beta_p(u)-1}\right)-1
 \right)u^{c_0Y-1}\,du.
\end{split}
\end{equation}

We split the prime factors $p\mid D$ according to their sizes relative to $Y^2$. For $0\le s\le1/4$, the inequalities
\[
 1-p^{-s}\le s\log p,\qquad
 p^{1/2-s}-1\ge(1-2^{-1/4})p^{1/2-s}
\]
give
\[
 \frac{1-p^{-s}}{p^{1/2-s}-1}
 \ll s(\log p)p^{s-1/2},
\]
with an absolute implicit constant. If $p\le Y^2$, then $\beta_p(u)=p^{-s}$ with $s=c_0/Y$. Since
$Y^{2c_0/Y}=\exp(2c_0\log Y/Y)=O(1)$, we get
\[
 \frac{1-\beta_p(u)}{\sqrt p\,\beta_p(u)-1}
 \ll\frac{c_0\log p}{Y\sqrt p}.
\]
Summing this over $p\le Y^2$ using Chebyshev's estimate and partial summation, we obtain
\[
\sum_{\substack{p\mid D\\p\le Y^2}}
\frac{1-\beta_p(u)}{\sqrt p\,\beta_p(u)-1}\ll\frac{c_0}{Y}\sum_{p\le Y^2}\frac{\log p}{\sqrt p}\ll c_0.
\]

Now consider the remaining primes, namely those with $p>Y^2$.
We apply the same inequalities but with $s=u/(2\log Y)$.
The exponent $u/(2\log Y)-1/2$ is negative, so
\[
 p^{u/(2\log Y)-1/2}
 \le\big(Y^2\big)^{u/(2\log Y)-1/2}
 =\frac{e^u}{Y}\le\frac eY.
\]
As a result,
\[
 \frac{1-\beta_p(u)}{\sqrt p\,\beta_p(u)-1}
 \ll\frac{u\log p}{Y\log Y}.
\]
Write $D=D_1D_2$ with $D_1$ and $D_2$ being the $Y^2$-smooth and $Y^2$-rough parts of $D$, respectively. Then
\[
\sum_{\substack{p\mid D\\p>Y^2}}\frac{1-\beta_p(u)}{\sqrt p\,\beta_p(u)-1}\ll \frac{u}{Y\log Y}\sum_{\substack{p\mid D\\p>Y^2}}\log p\le\frac{u\log D_2}{Y\log Y}.
\]

Gathering the estimates above, we have
\[
 \sum_{p\mid D}\frac{1-\beta_p(u)}{\sqrt p\,\beta_p(u)-1}
 \le C\left(c_0+\frac{u\log D_2}{2Y\log Y}\right)
\]
for some absolute constant $C>0$. Since
\[
 \beta_D(u)=D_1^{-c_0/Y}D_2^{-u/(2\log Y)},
\]
it follows that
\[
 \beta_D(u)\left(
 \exp\left(\sum_{p\mid D}\frac{1-\beta_p(u)}{\sqrt p\,\beta_p(u)-1}\right)-1
 \right)
 \le e^{Cc_0}\beta_D\bigl((1-C/Y)u\bigr)-\beta_D(u).
\]
Inserting this into~\eqref{eq:averaged-divisor-error} gives
\[
 \begin{split}
 &\sum_{\substack{r\mid D\\r<D}}\sqrt{\frac rD}
 \left|\sum_{e\mid D/r}\mu(e)b_{re}\right|\le e^{Cc_0}c_0Y\int_0^1
       \beta_D\bigl((1-C/Y)u\bigr)u^{c_0Y-1}\,du-b_D.
 \end{split}
\]
For sufficiently large $H$, we have $0<C/Y<1/2$. Changing variables by $w=(1-C/Y)u$, we have
 \begin{align*}
 c_0Y\int_0^1\beta_D\bigl((1-C/Y)u\bigr)u^{c_0Y-1}\,du
 &=\frac{c_0Y}{(1-C/Y)^{c_0Y}}
   \int_0^{1-C/Y}\beta_D(w)w^{c_0Y-1}\,dw\\
   &\le\frac{b_D}{(1-C/Y)^{c_0Y}}.
 \end{align*}
Therefore,
\[
 \sum_{\substack{r\mid D\\r<D}}\sqrt{\frac rD}
 \left|\sum_{e\mid D/r}\mu(e)b_{re}\right|
 \le b_D\left(e^{Cc_0}(1-C/Y)^{-c_0Y}-1\right)
 \ll c_0b_D,
\]
since
\[
 e^{Cc_0}(1-C/Y)^{-c_0Y}-1
 =(1+O(Cc_0))\left(1+O\left(c_0Y\cdot\frac{C}{Y}\right)\right)-1
 \ll c_0.
\]
This proves~\eqref{eq:weights-divisor-bound}.
\end{proof}

The normalization in Lemma~\ref{lem:averaged-weights}
gives~\ref{req:weights-normalization}. For~\ref{req:weights-decay},
multiplying the bound~\eqref{eq:weights-tail} by $|G_q(ar^2)|/q$
and summing over $r\mid D$ and $e\mid D/r$ with $re>V$
increases it by at most a factor $\exp(O(\sqrt{\log H}))$,
which is absorbed by the decay. Lemma~\ref{lem:divisor-completion}
below supplies the details. Thus extending the inner sum
in~\eqref{eq:sketch-Cq-divisors} to all $e\mid D/r$
costs $O(H^{-c/Y})$ for some fixed $c>0$, including when $D>V$.
Finally,~\eqref{eq:weights-divisor-bound} and the Gauss-sum estimate
$|G_q(ar^2)|/q\le\sqrt{2r/D}$ give~\ref{req:weights-arithmetic}.

These estimates give the following proposition, whose proof is completed
in Section~\ref{sec:complete-sum}.

\begin{proposition}\label{prop:arithmetic}
Fix a sufficiently small absolute constant $c_0>0$, and let the parameters
and weights be as above. Put $c=c_0/64$. Then for all sufficiently large $H$ depending on $c_0$, and
uniformly for $1\le q\le H$ and $a\in\Z$ with $(a,q)=1$, we have
\begin{equation}\label{eq:arithmetic-approx}
 C_q(a)
 =b_D\bigl(1+O(c_0)\bigr)
 +O\big(H^{-c/Y}\big),
\end{equation}
where $D=D(q)$ and the implied constants are absolute.
\end{proposition}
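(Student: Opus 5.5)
The plan is to start from the exact divisor expansion \eqref{eq:sketch-Cq-divisors}:
\[
 C_q(a)=\sum_{r\mid D}\frac{G_q(ar^2)}q\sum_{\substack{e\mid D/r\\ re\le V}}\mu(e)b_{re}.
\]
First, I will complete each inner sum to all $e\mid D/r$. The completion error is bounded by
\[
 \sum_{r\mid D}\frac{|G_q(ar^2)|}{q}\sum_{\substack{e\mid D/r\\ re>V}}\mu(e)^2\, b_{re}.
\]
For $re>V$ the tail bound \eqref{eq:weights-tail} of Lemma~\ref{lem:averaged-weights} gives $b_{re}\le 2H^{-c_0/(8Y)}$. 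We also have $|G_q(ar^2)|/q\le 1$, and the number of pairs $(r,e)$ is at most $\tau(D)^2$. Since $D\le q\le H$, the divisor bound gives $\tau(D)^2\le\exp(O(\log H/\log\log H))$. Because $\log H/\log\log H=o(\sqrt{\log H}\cdot\sqrt{\log H/\log\log H}\cdot Y^{-1}\cdots)$, it is more convenient to compare directly: $H^{c_0/(16Y)}=\exp\big(\tfrac{c_0}{16}\sqrt{\log H\log\log H}\big)$, which dominates $\exp(O(\log H/\log\log H))$ for large $H$. This leaves $O(H^{-c_0/(16Y)})$, which is at most $O(H^{-c/Y})$ with $c=c_0/64$. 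This is the content of Lemma~\ref{lem:divisor-completion}, which I would prove in exactly this way, keeping some slack in the exponent.

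Second, after completion I separate the term $r=D$. There $q\mid D^2$, so $G_q(aD^2)=q$, and the inner sum is just $e=1$, giving $b_D$. This yields \eqref{eq:sketch-Cq-separated}.

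Third, I bound the terms with $r<D$ using the Gauss sum estimate $|G_q(ar^2)|/q\le\sqrt{2r/D}$ for $r\mid D$ and $(a,q)=1$. This estimate holds because $G_q(ar^2)=(r^2,q)\,G_{q/(r^2,q)}(ar^2/(r^2,q))$ and $|G_m(b)|\le\sqrt{2m}$ for $(b,m)=1$. Writing $q=\prod p^m$ and $r=\prod p^{s}$ with $s\le\lceil m/2\rceil$, we get $(r^2,q)=\prod p^{\min(2s,m)}$. A prime-by-prime check, where the parity of $m$ is handled by the $\lceil\cdot\rceil$, then gives $(r^2,q)\sqrt{2q/(r^2,q)}/q\le\sqrt{2r/D}$.

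With this estimate, the $r<D$ terms are bounded by $\sqrt2$ times the left side of \eqref{eq:weights-divisor-bound}, which is $\ll c_0 b_D$. Combining the three steps gives $C_q(a)=b_D(1+O(c_0))+O(H^{-c/Y})$.

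I expect the main care to be needed in the Gauss sum bound $\sqrt{2r/D}$. The prime powers $p^m$ with $p=2$ and odd $m$ have to be checked, since the evaluation of $G_{2^k}$ and its vanishing or size-$\sqrt{2\cdot2^k}$ behaviour is where the factor $\sqrt2$ enters; I will also need multiplicativity of $G_q$ via the Chinese remainder theorem, with twisted arguments. The completion step is routine once it is noted that $\tau(D)^2$ is subexponential relative to $H^{1/Y}$.
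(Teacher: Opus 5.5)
\textbf{The completion step fails.} Your second and third steps are correct and agree with the paper. The $r=D$ term is $b_D$ because $q\mid D^2$. Your prime-by-prime check that $q/(r^2,q)\ge D/r$ is exactly the inequality $q_r\ge D(q_r)=D/r$ behind $|G_q(ar^2)|/q\le\sqrt{2r/D}$, and \eqref{eq:weights-divisor-bound} then finishes.

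The problem is the first step: the key comparison goes the wrong way. Write $L=\log H$. Then $H^{1/Y}=\exp(\sqrt{L\log L})$, while the divisor bound only gives $\tau(D)\le\exp(O(L/\log L))$, and
\[
 \frac{L/\log L}{\sqrt{L\log L}}=\frac{\sqrt L}{(\log L)^{3/2}}\longrightarrow\infty .
\]
So $\exp(O(\log H/\log\log H))$ is \emph{not} absorbed by $H^{c_0/(16Y)}$, and $\tau(D)$ is not subexponential relative to $H^{1/Y}$.

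This is not just slack in the divisor bound. Take $q=D$ to be the largest primorial not exceeding $H$; it has $k\asymp\log H/\log\log H$ prime factors and satisfies $D\ge H^{1/2}$. Exactly half of its $2^k$ divisors exceed $\sqrt D>V$. Each such divisor $m$ gives a tail pair $(r,e)=(m,1)$, so there are at least $2^{k-1}=\exp(c\log H/\log\log H)$ tail pairs. Bounding each $|G_q(ar^2)|/q$ by $1$ therefore loses far more than the saving $H^{-c_0/(8Y)}$ supplied by \eqref{eq:weights-tail}.

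\textbf{The fix.} Use the Gauss-sum decay in the tail as well; you already prove it for your third step. With $|G_q(ar^2)|/q\le\sqrt{2r/D}$ and $h=D/r$, the tail is at most
\[
 2\sqrt2\,H^{-c_0/(8Y)}\sum_{h\mid D}\frac{2^{\omega(h)}}{\sqrt h}
 \le2\sqrt2\,H^{-c_0/(8Y)}\prod_{p\mid D}\Bigl(1+\frac2{\sqrt p-1}\Bigr).
\]
Since $D$ has at most $\omega(q)\ll\log H/\log\log H$ prime factors, the product is at most $\exp\bigl(2\sum_{p\le p_{\omega(q)}}(\sqrt p-1)^{-1}\bigr)=\exp(O(\sqrt{\log H}))$. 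This is genuinely negligible against $\exp\bigl(\frac{c_0}{8}\sqrt{\log H\log\log H}\bigr)$. This is the argument of Lemma~\ref{lem:divisor-completion}, and it gives the error $O(H^{-c/Y})$ with $c=c_0/64$.

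Also note that your starting point \eqref{eq:sketch-Cq-divisors} depends on the identity $G_q(av^2)=G_q(a(v,D)^2)$. You should prove it together with the Gauss-sum bound, as the paper does in Lemma~\ref{lem:quadratic-sums}.
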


Thus $C_q(a)$ has the positive main term $b_D$ we sought, with a small
relative error and a uniformly small absolute error.

Now we are ready to define the van der Corput witness for the squares. Set 
\begin{equation}\label{eq:gaussian-sum}
 \mathcal B_H(\theta)
 =\sum_{v\le H^{1/8}}v\lambda_v
   \sum_{\substack{x\in\Z\\0<v^2x^2\le N}}
   e^{-v^2x^2/H^2}e(\theta v^2x^2).
\end{equation}

The factor $v$ cancels the factor $1/v$ from the change of variables
$t=vx$ in the corresponding integral, so that the complete sum is
exactly $C_q(a)$. For $\eta\in\R$, write
\begin{equation}\label{eq:gaussian-integral}
 I_H(\eta)=\int_{\R}e^{-x^2/H^2}e(\eta x^2)\,dx
 =\frac{\sqrt\pi H}{\sqrt{1-2\pi i\eta H^2}},
\end{equation}
where the square root is chosen to have positive real part.
The following proposition, proved in Section~\ref{sec:poisson},
relates $\mathcal B_H$ to $C_q(a)$ and $I_H(\eta)$.

\begin{proposition}\label{prop:gaussian}
Let $1\le q\le H$, $(a,q)=1$, and $|\eta|\le2/(qH)$. Then
\begin{equation}\label{eq:global-gaussian}
 \mathcal B_H(a/q+\eta)=C_q(a)I_H(\eta)+O(H^{3/4}),
\end{equation}
where the implied constant is absolute.
\end{proposition}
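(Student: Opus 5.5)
Fix $v\le V=H^{1/8}$ and treat the inner sum over $x$ separately. Write $\theta=a/q+\eta$ and
\[
S_v(\theta)=\sum_{\substack{x\in\Z\\0<v^2x^2\le N}}e^{-v^2x^2/H^2}e(\theta v^2x^2).
\]
The plan has four steps.

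\textbf{Step 1: remove the constraints on $x$.} The condition $v^2x^2\le N$ only excludes $|x|>\sqrt N/v$. There $e^{-v^2x^2/H^2}\le e^{-N/H^2}=e^{-(\log N)^2}$, so dropping the condition costs a negligible amount. Adding back $x=0$ costs $O(1)$. After multiplying by $v|\lambda_v|$ and summing, this totals $O(\sum_v v|\lambda_v|)=O(V^2)=O(H^{1/4})$ by \eqref{eq:cost}. We are left with
\[
S_v=\sum_{x\in\Z}f_v(x),\qquad f_v(x)=e^{-v^2x^2/H^2}e(\eta v^2x^2).
\]

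\textbf{Step 2: split into residue classes and apply Poisson.} Split $x$ modulo $q$ and apply \eqref{eq:residue-poisson}:
\[
S_v=\frac1q\sum_{\ell\in\Z}\Bigl(\sum_{r\bmod q}e\Bigl(\frac{av^2r^2+\ell r}{q}\Bigr)\Bigr)\widehat{f_v}(\ell/q).
\]
For the term $\ell=0$, the inner sum is $G_q(av^2)$. The change of variables $t=vx$ gives $\widehat{f_v}(0)=I_H(\eta)/v$. So the $\ell=0$ terms contribute exactly
\[
\sum_v v\lambda_v\frac{G_q(av^2)}{q}\cdot\frac{I_H(\eta)}{v}=C_q(a)I_H(\eta).
\]

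\textbf{Step 3: bound the terms with $\ell\ne0$ (the main obstacle).} The Gaussian transform is explicit:
\[
\widehat{f_v}(\xi)=\frac{\sqrt\pi\,H}{v\sqrt{1-2\pi i\eta H^2}}\exp\Bigl(-\frac{\pi^2\xi^2H^2}{v^2(1-2\pi i\eta H^2)}\Bigr).
\]
Put $w=1-2\pi i\eta H^2$. Then $\Re(1/w)=1/|w|^2$, so the decay factor is $\exp(-\pi^2\xi^2H^2/(v^2|w|^2))$.

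Since $|\eta|\le2/(qH)$, we have $|w|\ll 1+H/q$. Evaluating at $\xi=\ell/q$, the exponent is
\[
\frac{\pi^2\ell^2H^2}{q^2v^2|w|^2}\gg\frac{\ell^2H^2}{v^2(q+H)^2}\gg\frac{\ell^2}{v^2}.
\]
That lower bound alone is not enough when $v$ is large, so the Gaussian cannot be used by itself. We combine it with the complete sum: the twisted Gauss sum $\sum_{r}e((av^2r^2+\ell r)/q)$ has size at most $\sqrt{2q(v^2,q)}$.

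Do the case with $|w|\asymp H/q$ first (i.e. $q\le H$ with $|\eta|H^2\gg H/q$). There $|\widehat{f_v}|\ll\sqrt{qH}/v$, and the exponent is $\gg\ell^2q^2H^2/(q^2v^2H^2)=\ell^2/v^2$. Each $v$ then contributes at most
\[
\frac1q\sum_{\ell\ne0}\sqrt{q(v^2,q)}\,\frac{\sqrt{qH}}{v}e^{-c\ell^2/v^2}\ll\sqrt{(v^2,q)}\sqrt H.
\]
Multiplying by $v|\lambda_v|$, summing with \eqref{eq:cost}, and using $(v^2,q)\le v^2$, the total is $O(H^{1/2}V^3)=O(H^{7/8})$. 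This is too weak.

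So the bound has to be refined. Write $g=(v^2,q)$. The twisted sum vanishes unless $g\mid\ell$ (after reduction), so we substitute $\ell=g\ell'$. The exponent then becomes $\gg g^2\ell'^2/v^2$, and summing over $\ell'$ gives $\ll1+v/g$ instead of $v$. With this refinement the total is
\[
\sum_v|\lambda_v|\sqrt{H/q}\,\sqrt{gq}\,(1+v/g)\ll H^{1/2}V^2=H^{3/4}.
\]
The case $|\eta|$ small ($|w|\asymp1$) is similar and easier: there $|\widehat{f_v}|\ll H/v$ and the exponent is $\gg\ell^2H^2/(q^2v^2)$, so we get rapid decay directly. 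Getting this saving right is where I expect the main work to be.

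\textbf{Step 4: collect.} Adding the truncation errors from Step 1, the main term from Step 2, and the tail bound from Step 3 yields \eqref{eq:global-gaussian}.
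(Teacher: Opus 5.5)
Your proposal is correct and follows the paper's argument. Your observation that the twisted sum modulo $q$ vanishes unless $(v^2,q)\mid\ell$ is equivalent to the paper's reduction to modulus $q_v=q/(v^2,q)$ before Poisson summation. After that, both arguments bound the $\ell\ne0$ contribution, once multiplied by $v$, by $\ll v\sqrt{q_v}\,|w|^{1/2}\ll v\sqrt H$, and then sum using $\sum_v v|\lambda_v|\le V^2$.

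One thing to tidy: your split into $|w|\asymp H/q$ and $|w|\asymp 1$ is not exhaustive, and in the $|w|\asymp1$ case there is no rapid decay when $qv\gg H$. Drop the case split and run the refined computation for general $w=1-2\pi i\eta H^2$, using $q|w|\le(1+4\pi)H$, as the paper does.
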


The argument of $I_H(\eta)$ lies in $(-\pi/4,\pi/4)$. For sufficiently small $c_0$,
the same is true of $b_D(1+O(c_0))$, so their product has positive real
part. The errors are $O(H^{1-c/Y})$, and Dirichlet approximation
covers every $\theta\in\T$. Together with $C_1(0)=b_1=1$, this gives
the following proposition, whose proof is given in
Section~\ref{sec:finishing}.

\begin{proposition}\label{prop:uniform}
For a sufficiently small choice of $c_0$, we have
\begin{equation}\label{eq:normalization}
 \mathcal B_H(0)=\sqrt\pi H+O(H^{3/4}).
\end{equation}
In particular, $\mathcal B_H(0)>0$ and $\mathcal B_H(0)\asymp H$
for sufficiently large $H$. Moreover, there are constants
$c,C>0$ such that
\begin{equation}
 \Re\mathcal B_H(\theta)\ge-CH^{1-c/Y}
 \qquad(\theta\in\T).
\end{equation}
\end{proposition}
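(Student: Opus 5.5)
The plan is to combine Proposition~\ref{prop:gaussian} with Proposition~\ref{prop:arithmetic} via Dirichlet approximation.

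\textbf{Normalization.} Take $\theta=0$, which is $q=1$, $a=0$, $\eta=0$ in Proposition~\ref{prop:gaussian}. This gives $\mathcal B_H(0)=C_1(0)I_H(0)+O(H^{3/4})$. Here $C_1(0)=\sum_v\lambda_v=b_1=1$ and $I_H(0)=\sqrt\pi H$, so \eqref{eq:normalization} follows. Positivity and $\mathcal B_H(0)\asymp H$ are then immediate for large $H$.

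\textbf{Reduction to rational approximations.} Fix $\theta\in\T$. By Dirichlet's theorem with parameter $H$, there are $1\le q\le H$ and $(a,q)=1$ with $|\theta-a/q|\le1/(qH)$. Setting $\eta=\theta-a/q$ satisfies $|\eta|\le 2/(qH)$, so Proposition~\ref{prop:gaussian} applies and gives $\mathcal B_H(\theta)=C_q(a)I_H(\eta)+O(H^{3/4})$. Proposition~\ref{prop:arithmetic} then gives
\[
C_q(a)I_H(\eta)=b_D\bigl(1+O(c_0)\bigr)I_H(\eta)+O\bigl(H^{-c/Y}|I_H(\eta)|\bigr).
\]
Since $|I_H(\eta)|\le\sqrt\pi H$, the second term is $O(H^{1-c/Y})$.

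\textbf{Positivity of the main term.} From \eqref{eq:gaussian-integral}, $I_H(\eta)=\sqrt\pi H\,w^{-1/2}$ with $w=1-2\pi i\eta H^2$. As $\Re w=1>0$, we have $|\arg w|<\pi/2$, hence $|\arg I_H(\eta)|<\pi/4$.

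Write $b_D(1+O(c_0))=b_D\rho$ with $|\rho-1|\le Kc_0$, where $K$ is absolute. Choose $c_0$ so small that $|\arg\rho|\le\arcsin(Kc_0)<\pi/4$. Then the argument of the product lies strictly in $(-\pi/2,\pi/2)$. Since $b_D>0$ by Lemma~\ref{lem:averaged-weights}, the real part of the main term is nonnegative.

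The point needing care is uniformity: the angle $|\arg I_H(\eta)|$ can approach $\pi/4$ as $|\eta|H^2\to\infty$. So we need a fixed margin, namely $|\arg\rho|\le\pi/4-\delta$ with $\delta>0$ fixed, which holds once $c_0$ is fixed small. Nonnegativity of the real part only requires the two arguments to sum to less than $\pi/2$, so $\arg\rho$ being bounded by a small absolute constant suffices.

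\textbf{Conclusion.} It follows that
\[
\Re\mathcal B_H(\theta)\ge -CH^{1-c/Y}-CH^{3/4}.
\]
Since $Y=\sqrt{\log H/\log\log H}\to\infty$, we have $H^{3/4}\le H^{1-c/Y}$ for large $H$, which gives the claim.

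The main obstacle is not in this assembly step. It lies in the inputs: the uniformity of Proposition~\ref{prop:arithmetic} over all $q\le H$, and controlling the Poisson-summation error uniformly over the whole Dirichlet range $|\eta|\le 2/(qH)$. Taking those two propositions as given, the proof is short.
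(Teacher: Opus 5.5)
Your proposal is correct and follows the paper's proof step for step. The steps are: the normalization from $q=1$, $a=0$, $\eta=0$ in Proposition~\ref{prop:gaussian}; Dirichlet approximation with $q\le H$ and $|\eta|\le 2/(qH)$; inserting Proposition~\ref{prop:arithmetic} together with $|I_H(\eta)|\le\sqrt\pi H$; the sector argument placing both $\arg I_H(\eta)$ and $\arg\bigl(b_D(1+O(c_0))\bigr)$ in $(-\pi/4,\pi/4)$; and absorbing $H^{3/4}$ into $H^{1-c/Y}$. Your concern about needing a fixed margin is harmless but unnecessary, since the two strict bounds $|\arg I_H(\eta)|<\pi/4$ and $|\arg\rho|<\pi/4$ already force the sum of the arguments below $\pi/2$.
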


\begin{proof}[Proof of Theorem~\ref{thm:main} assuming
Proposition~\ref{prop:uniform}]
Let $c,C>0$ be as in Proposition~\ref{prop:uniform}, and put
\[
 P(\theta)=
 \frac{CH^{1-c/Y}+\Re\mathcal B_H(\theta)}
      {CH^{1-c/Y}+\mathcal B_H(0)}.
\]
By Proposition~\ref{prop:uniform}, we have $P\ge0$ and $P(0)=1$.
Pairing $x$ and $-x$ in~\eqref{eq:gaussian-sum} and grouping
the terms with $v|x|=n$, we obtain
\[
 P(\theta)=a_0+
 \sum_{1\le n\le\sqrt N}a_n\cos(2\pi n^2\theta),
\]
where
\[
 a_0=\frac{CH^{1-c/Y}}{CH^{1-c/Y}+\mathcal B_H(0)}
\]
and
\[
 a_n=
 \frac{2e^{-n^2/H^2}}{CH^{1-c/Y}+\mathcal B_H(0)}
 \sum_{\substack{v\mid n\\v\le H^{1/8}}}v\lambda_v
 \qquad(1\le n\le\sqrt N).
\]
Since $\mathcal B_H(0)\asymp H$, we have
\[
 0<a_0\ll H^{-c/Y}.
\]
Finally, $\log H=\frac12\log N-\log\log N$, so
\[
 \frac{\log H}{Y}
 =\sqrt{\log H\log\log H}
 \asymp\sqrt{\log N\log\log N}.
\]
This proves the theorem after decreasing the constant
in the exponent.
\end{proof}

\section{Finishing the complete sum estimate}\label{sec:complete-sum}
We prove Proposition~\ref{prop:arithmetic}. We first establish the
Gauss-sum identities and bound used in Section~\ref{sec:defining-weight},
and then estimate the error from extending the divisor sum
in~\eqref{eq:sketch-Cq-divisors}.

\begin{lemma}\label{lem:quadratic-sums}
Let $q\in\N$ and $a\in\Z$ with $(a,q)=1$. Let $D=D(q)$ be as above. Then
\[
 G_q(av^2)=G_q\bigl(a(v,D)^2\bigr)\qquad(v\ge1),
 \qquad G_q(aD^2)=q.
\]
In addition, we have
\begin{equation}\label{eq:gauss-bound}
 \frac{|G_q(ar^2)|}{q}\le\sqrt{\frac{2r}{D}}
\end{equation}
for every $r\mid D$.
\end{lemma}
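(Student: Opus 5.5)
Everything reduces to prime powers via the Chinese remainder theorem. Write $q=\prod p^{m}$ and $q=q_1q_2$ with $(q_1,q_2)=1$. Choose $\bar q_2$ with $q_2\bar q_2\equiv1\pmod{q_1}$ and $\bar q_1$ with $q_1\bar q_1\equiv1\pmod{q_2}$. The standard multiplicativity
\[
G_q(b)=G_{q_1}(b\bar q_2)\,G_{q_2}(b\bar q_1)
\]
holds for every integer $b$. It follows from writing $x=q_2\bar q_2x_1+q_1\bar q_1x_2$, or equivalently from $x=q_2y_1+q_1y_2$, which gives $G_q(b)=G_{q_1}(bq_2)G_{q_2}(bq_1)$. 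Also $D(q)=\prod D(p^m)$ and $(v,D)$ factors accordingly. Since $a\bar q_2$ is again coprime to $q_1$, all three claims reduce to $q=p^m$, $D=p^{\lceil m/2\rceil}$. For the bound \eqref{eq:gauss-bound} the constant $\sqrt2$ must not multiply over primes, so I will show the sharper local bound $|G_{p^m}(ar^2)|/p^m\le\sqrt{r/D}$ for odd $p$, and allow the factor $\sqrt2$ only at $p=2$.

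For the prime-power case, write $v=p^jv'$ with $p\nmid v'$. If $j\ge\lceil m/2\rceil$, then $p^m\mid v^2$, so $G_{p^m}(av^2)=p^m=G_{p^m}(aD^2)$. In particular $G_q(aD^2)=q$.

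If $j<\lceil m/2\rceil$, substitute $x\mapsto v'^{-1}x$, which is a bijection modulo $p^m$. This gives $G_{p^m}(av^2)=G_{p^m}(ap^{2j})$, which depends only on $p^j=(v,D)$. That proves the first identity.

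For the bound, with $r=p^j$ and $2j<m$, use the reduction $G_{p^m}(ap^{2j})=p^{2j}G_{p^{m-2j}}(a)$. This holds because $x\mapsto ap^{2j}x^2/p^m$ depends only on $x\bmod p^{m-2j}$, and each residue is hit $p^{2j}$ times. Then the classical evaluation gives $|G_{p^k}(a)|=p^{k/2}$ for odd $p$ and $k\ge1$. For $p=2$ one has $|G_{2^k}(a)|\le\sqrt2\cdot2^{k/2}$: the value is $0$ for $k=1$ and $2^{(k+1)/2}$ or $2^{k/2}\cdot\sqrt2$ for $k\ge2$. Hence
\[
\frac{|G_{p^m}(ap^{2j})|}{p^m}\le \epsilon_p\, p^{-(m-2j)/2},
\]
where $\epsilon_p=1$ for odd $p$ and $\epsilon_2=\sqrt2$.

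It remains to compare $p^{-(m-2j)/2}$ with $\sqrt{r/D}=p^{(j-\lceil m/2\rceil)/2}$. The inequality needed is $m-2j\ge\lceil m/2\rceil-j$, that is, $m-j\ge\lceil m/2\rceil$. This holds since $j\le\lceil m/2\rceil-1\le m/2$. When $j=\lceil m/2\rceil$ the ratio is $1=\sqrt{r/D}$. Multiplying the local bounds over the primes dividing $q$ gives \eqref{eq:gauss-bound}, with the single factor $\sqrt2$ coming only from $p=2$.

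The main technical point is the evaluation of $|G_{p^k}(a)|$, in particular the $2$-adic cases. I would quote these as classical facts: the recursion $G_{p^k}(a)=pG_{p^{k-2}}(a)$ for $k\ge2$ and odd $p$, $|G_p(a)|=\sqrt p$, and the explicit values for $2^k$. I would also take care with the CRT normalization, so that the local arguments remain coprime to their moduli.
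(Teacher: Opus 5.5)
Your proof is correct, but it is organized differently from the paper's. The paper never factors $q$. It works globally with the reduced pair $q_v=q/(v^2,q)$, $a_v=av^2/(v^2,q)$, and shows by a gcd computation that $q_g=q_v$ and $(v/g,q_v)=1$ for $g=(v,D)$. The first identity then comes from the single substitution $x\mapsto(v/g)x$ modulo $q_v$. The bound follows in one line from the global estimate $\bigl|\sum_{x\bmod q}e((ax^2+bx)/q)\bigr|\le\sqrt{2q}$ for $(a,q)=1$, applied to $G_{q_r}(a_r)$, together with $q_r\ge D(q_r)=D/r$.

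Your CRT reduction replaces this gcd bookkeeping with prime-power computations, and the pieces correspond directly:
\begin{itemize}
\item your substitution $x\mapsto v'^{-1}x$ is the local form of the paper's change of variables;
\item your identity $G_{p^m}(ap^{2j})=p^{2j}G_{p^{m-2j}}(a)$ is the local form of the reduction $G_q(av^2)/q=G_{q_v}(a_v)/q_v$;
\item your exponent inequality $m-2j\ge\lceil m/2\rceil-j$ is the local form of $q_r\ge D/r$, which the paper states in one line without further justification.
\end{itemize}

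The price of going local is exactly the one you flagged. A bound losing a constant at each prime would be useless, so you need the exact values $|G_{p^k}(a)|=p^{k/2}$ for odd $p$ and a separate $2$-adic evaluation. The paper's global bound, provable directly by squaring the sum, absorbs the prime $2$ into a single factor $\sqrt2$ with no case analysis.

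In exchange, your argument is self-contained and gives the slightly sharper bound $|G_q(ar^2)|/q\le\sqrt{r/D}$ for odd $q$. The paper's global formulation has a different advantage: the same reduction and the same bound, including the linear term $bx$, are reused verbatim for the nonzero frequencies in the Poisson summation of Section~\ref{sec:poisson}.
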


\begin{proof}
The identity $G_q(aD^2)=q$ is trivial as $q\mid D^2$. For each $v\ge1$, put
\begin{equation}\label{eq:reduced-modulus}
 q_v=\frac{q}{(v^2,q)},\qquad
 a_v=\frac{av^2}{(v^2,q)}.
\end{equation}
Then $(a_v,q_v)=1$ and 
\begin{equation}\label{eq:reduced-sum}
 \frac{G_q(av^2)}q=\frac{G_{q_v}(a_v)}{q_v}.
\end{equation}
This reduction will also be used in Section~\ref{sec:poisson}.

Put $g=(v,D)$. Since $q\mid D^2$, we have 
\[
(g^2,q)=((v,D)^2,q)=(v^2,(D^2,q))=(v^2,q),
\]
so that $q_g=q_v$. Moreover, setting $h=(v/g,q_v)$, we have $h^2g^2\mid v^2$ and $h(v^2,q)\mid q$. Thus $h(g^2,(v^2,q))\mid (v^2,q)$. Since $(v^2,q)\mid g^2$, we have $h(v^2,q)\mid (v^2,q)$, which implies $h=1$. So $(v/g,q_v)=1$. Consequently, since $a_v=a_g(v/g)^2$, changing variables by $x\mapsto(v/g)x$ in~\eqref{eq:reduced-sum} gives 
\[
G_q(av^2)=q\cdot\frac{G_{q_v}(a_v)}{q_v}=q\cdot\frac{G_{q_v}(a_g)}{q_v}=q\cdot\frac{G_{q_g}(a_g)}{q_g}=G_q(ag^2).
\]

Finally, by the square-root cancellation bound for quadratic Gauss sums:
\begin{equation}\label{eq:primitive-gauss}
 \left|\frac1q\sum_{x\bmod q}e\bigl((ax^2+bx)/q\bigr)\right|
 \le\sqrt{\frac2q},
\end{equation}
where $q\in\N$, $a\in\Z$ with $(a,q)=1$, and $b\in\Z$, we have, with $v=r\mid D$, 
\[
\frac{|G_q(ar^2)|}{q}=\frac{|G_{q_r}(a_r)|}{q_r}\le \sqrt{\frac{2}{q_r}}\le \sqrt{\frac{2r}{D}},
\]
since $q_r\ge D(q_r)=D/g=D/r$.
\end{proof}

The next lemma bounds the error from removing the restriction $re\le V$
in~\eqref{eq:sketch-Cq-divisors}.
\begin{lemma}\label{lem:divisor-completion}
For all sufficiently large $H$ depending on $c_0$, and
uniformly for $1\le q\le H$ and $a\in\Z$ with $(a,q)=1$, we have
\begin{equation}\label{eq:divisor-average}
 C_q(a)=\sum_{r\mid D}\frac{G_q(ar^2)}q
          \sum_{e\mid D/r}\mu(e)b_{re}
       +O\big(H^{-c/Y}\big),
\end{equation}
where $c=c_0/64$.
\end{lemma}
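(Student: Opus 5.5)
By the computation \eqref{eq:sketch-Cq-divisors}, $C_q(a)$ equals the double sum in \eqref{eq:divisor-average} restricted to $re\le V$. So the error term is exactly
\[
 E=\sum_{r\mid D}\frac{G_q(ar^2)}q\sum_{\substack{e\mid D/r\\ re>V}}\mu(e)b_{re}.
\]
The plan is to bound $|E|$ by the triangle inequality. We use $|G_q(ar^2)|/q\le 1$, which is trivial, and $|\mu(e)|\le1$. For the weights we use the tail bound \eqref{eq:weights-tail}, which gives $b_{re}\le 2H^{-c_0/(8Y)}$ whenever $re>V$. This yields
\[
 |E|\le 2H^{-c_0/(8Y)}\,\#\{(r,e):r\mid D,\ e\mid D/r\}\le 2H^{-c_0/(8Y)}\,\tau(D)^2,
\]
where $\tau$ is the divisor function. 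Equivalently, the count of pairs is $\tau_3(D)$. If $D\le V$ there are no terms at all and $E=0$.

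The only remaining step is to control the divisor count by $\exp(O(\log H/\log\log H))$. Note that $D\le q\le H$, because $D(q)\mid q$ when we take exponents $\lceil m/2\rceil\le m$. The classical divisor bound then gives
\[
 \tau(D)\le\exp\bigl(O(\log H/\log\log H)\bigr).
\]
This is much smaller than the saving $H^{-c_0/(8Y)}=\exp\bigl(-\tfrac{c_0}{8}\sqrt{\log H\log\log H}\bigr)$. Indeed, $\log H/\log\log H=o\bigl(\sqrt{\log H\log\log H}\bigr)$. Hence for $H$ sufficiently large in terms of $c_0$, we get $\tau(D)^2\le H^{c_0/(16Y)}$, say, and therefore
\[
 |E|\le 2H^{-c_0/(16Y)}\ll H^{-c_0/(64Y)}.
\]
This is the claim with $c=c_0/64$, with room to spare.

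If one prefers to avoid quoting the divisor bound, the paper's remark suggests the cruder count $\tau(D)\le 2^{\omega(D)}\cdots$, giving $\exp(O(\sqrt{\log H}))$. That also suffices, since $\sqrt{\log H}=o(\log H/Y)$.

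There is no real obstacle here. The only point needing care is the bookkeeping that the truncated sum in \eqref{eq:sketch-Cq-divisors} is exactly the full sum minus the $re>V$ terms, and that $b_d$ is defined for all $d$. We must also check that the inequality $\tau(D)^2\le H^{c_0/(16Y)}$ holds uniformly for $q\le H$, which the divisor bound provides, since its constant does not depend on $c_0$.
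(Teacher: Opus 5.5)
Your reduction to the tail $E$ over pairs with $re>V$ is correct. The step that disposes of $E$ fails, however, because your asymptotic comparison is backwards. Write $L=\log H$. You need the counting loss to be smaller than the saving $H^{-c_0/(8Y)}=\exp(-\tfrac{c_0}{8}\sqrt{L\log L})$, but
\[
 \frac{L/\log L}{\sqrt{L\log L}}=\frac{\sqrt L}{(\log L)^{3/2}}\longrightarrow\infty .
\]
So $\log H/\log\log H$ is much \emph{larger} than $\sqrt{\log H\log\log H}$, not $o$ of it.

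This is not an artifact of a lossy divisor bound. Take $q$ to be the largest product of the first $k$ primes with $q\le H$. Then $q$ is squarefree, $D=q\gg H/\log H$, and $\omega(D)\sim\log H/\log\log H$. At least half of the divisors $d$ of $D$ satisfy $d\ge\sqrt D>V$, and each such $d$ gives the pair $(d,1)$. Hence there are at least $2^{\omega(D)-1}$ pairs $(r,e)$ with $re>V$ and $\mu(e)\ne0$. Your bound $2H^{-c_0/(8Y)}\cdot\#\{(r,e)\}$ is therefore at least $\exp\bigl((\log 2+o(1))\log H/\log\log H\bigr)$, which tends to infinity. The same example refutes your fallback claim $\tau(D)\le\exp(O(\sqrt{\log H}))$, since here $\tau(D)=2^{\omega(D)}$. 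The factor $\exp(O(\sqrt{\log H}))$ mentioned in the paper refers to the sum \emph{weighted} by $|G_q(ar^2)|/q$, not to a bare divisor count.

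The missing ingredient is the Gauss-sum decay $|G_q(ar^2)|/q\le\sqrt{2r/D}$ from Lemma~\ref{lem:quadratic-sums}. Replacing it by the trivial bound $1$ is exactly what loses. Keep it, write $h=D/r$, and drop the condition $re>V$. The tail is then at most
\[
 2\sqrt2\,H^{-c_0/(8Y)}\sum_{h\mid D}\frac{2^{\omega(h)}}{\sqrt h}
 =2\sqrt2\,H^{-c_0/(8Y)}\prod_{p^k\parallel D}\Bigl(1+2\sum_{j=1}^kp^{-j/2}\Bigr)
 \le2\sqrt2\,H^{-c_0/(8Y)}\prod_{p\mid q}\Bigl(1+\frac2{\sqrt p-1}\Bigr).
\]
The local factors decrease in $p$, so the product is largest when the primes dividing $q$ are the first $\omega(q)$ primes. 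All of these are $\ll\log H$, so the product is at most $\exp\bigl(O(\sum_{p\ll\log H}p^{-1/2})\bigr)=\exp(O(\sqrt{\log H}))$. This loss genuinely is $o(\sqrt{\log H\log\log H})$ in the exponent. Absorbing it into $H^{-c_0/(8Y)}$ gives the lemma with $c=c_0/64$ for large $H$. The $h^{-1/2}$ weights are what turn the count $2^{\omega(D)}$ into an Euler product with local factors $1+O(p^{-1/2})$; without them you cannot do better than $2^{\omega(D)}$.
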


\begin{proof}
For $q=1$, both $C_1(a)$ and the main term in~\eqref{eq:divisor-average} equal $b_1=1$, so we may assume $q\ge2$.
Starting from~\eqref{eq:sketch-Cq-divisors}, we now remove the restriction
$re\le H^{1/8}$. For $re>H^{1/8}$, we have
$b_{re}\le2H^{-c_0/(8Y)}$ by~\eqref{eq:weights-tail}.
Using~\eqref{eq:gauss-bound} and then writing $h=D/r$, we see that the tail of
\eqref{eq:sketch-Cq-divisors} over $re>H^{1/8}$ is at most
\begin{align*}
2H^{-c_0/(8Y)}\sum_{r\mid D}\frac{|G_q(ar^2)|}{q}\sum_{e\mid D/r}|\mu(e)|
 &\le2\sqrt2 H^{-c_0/(8Y)}\sum_{h\mid D}\frac1{\sqrt h}
       \sum_{e\mid h}|\mu(e)|\notag\\
 &=2\sqrt2 H^{-c_0/(8Y)}\prod_{p^k\parallel D}
     \left(1+2\sum_{j=1}^kp^{-j/2}\right)\notag\\
 &\le2\sqrt2 H^{-c_0/(8Y)}\prod_{p\mid q}
     \left(1+\frac2{\sqrt p-1}\right)\notag\\
     &\ll H^{-c_0/(8Y)}\exp\left(2\sum_{p\le p_{\omega(q)}}\frac{1}{\sqrt{p}-1}\right)\notag\\
     &= H^{-c_0/(8Y)}\exp\left(O\left(\sqrt{\omega(q)\log(3\omega(q))}\right)\right)\notag\\
     &=H^{-c_0/(8Y)}\exp\left(O\left(\sqrt{\log(3q)}\right)\right)\notag\\
 &\le H^{-c_0/(8Y)}\exp\bigl(O(\sqrt{\log H})\bigr)
 \le H^{-c/Y},
\end{align*}
where $\omega(q)$ denotes the number of distinct prime factors of $q$ and satisfies $\omega(q)\ll \log(3q)/\log\log(3q)$, and $p_{\omega(q)}$ is the $\omega(q)$-th prime and satisfies $p_{\omega(q)}\ll \omega(q)\log(3\omega(q))$. Since
\[
 \frac{c_0\log H}{8Y}=\frac{c_0}{8}\sqrt{\log H\log\log H},
\]
the factor $\exp(O(\sqrt{\log H}))$ is absorbed with $c=c_0/64$
for sufficiently large $H$. This proves~\eqref{eq:divisor-average}.
\end{proof}

\begin{proof}[Proof of Proposition~\ref{prop:arithmetic}]
The term with $r=D$ in~\eqref{eq:divisor-average} is exactly $b_D$, since
$G_q(aD^2)=q$. Applying~\eqref{eq:gauss-bound} to the remaining terms and
then invoking~\eqref{eq:weights-divisor-bound}, we obtain
\begin{align*}
|C_q(a)-b_D|
 &\le\sum_{\substack{r\mid D\\r<D}}\frac{|G_q(ar^2)|}{q}
     \left|\sum_{e\mid D/r}\mu(e)b_{re}\right|
     +O\big(H^{-c/Y}\big)\\
 &\le\sqrt2\sum_{\substack{r\mid D\\r<D}}\sqrt{\frac rD}
     \left|\sum_{e\mid D/r}\mu(e)b_{re}\right|
     +O\big(H^{-c/Y}\big)\\
 &\ll c_0b_D+H^{-c/Y}.
\end{align*}
This proves~\eqref{eq:arithmetic-approx}.
\end{proof}

\section{Applying the Poisson summation formula}\label{sec:poisson}
We prove Proposition~\ref{prop:gaussian} by applying Poisson summation
to $\mathcal B_H$, defined in~\eqref{eq:gaussian-sum}.

\begin{proof}[Proof of Proposition~\ref{prop:gaussian}]
Put $\theta=a/q+\eta$. We use the bounds~\eqref{eq:cost} with $V=H^{1/8}$.
We first restore $x=0$ and remove the cutoff $v^2x^2\leq N$. Restoring $x=0$ changes the sum by
\[\sum_{v\leq H^{1/8}}v\lambda_v=O(H^{1/4}).\]
The contribution from the tail given by $v^2x^2>N$ is also negligible. Indeed, if $v^2x^2>N$, then
\[
 e^{-v^2x^2/H^2}
 \le e^{-N/(2H^2)}e^{-v^2x^2/(2H^2)}.
\]
Since $v\le H^{1/8}$, an integral comparison gives
$\sum_{x\in\Z}e^{-v^2x^2/(2H^2)}\ll H/v$. Hence the tail contribution
is at most
\[
 \begin{aligned}
 e^{-N/(2H^2)}\sum_{v\le H^{1/8}}v|\lambda_v|
       \sum_{x\in\Z}e^{-v^2x^2/(2H^2)}\ll H^{5/4}e^{-(\log N)^2/2}\ll1,
 \end{aligned}
\]
where we used $N/H^2=(\log N)^2$ and~\eqref{eq:cost}.
Thus
\[
 \mathcal B_H(\theta)
 =\sum_{v\le H^{1/8}}v\lambda_v
   \sum_{x\in\Z}e^{-v^2x^2/H^2}e(\theta v^2x^2)
 +O(H^{1/4}).
\]

Fix $v\le H^{1/8}$ and recall from~\eqref{eq:reduced-modulus} that
\[
 q_v=\frac q{(v^2,q)},\qquad
 a_v=\frac{av^2}{(v^2,q)},\qquad
 \frac{av^2}{q}=\frac{a_v}{q_v},\qquad (a_v,q_v)=1.
\]
Define
\[
 \Phi_v(x)=e^{-v^2x^2/H^2}e(\eta v^2x^2).
\]
This belongs to $\Sch(\R)$, since each derivative is a polynomial
times $\Phi_v$ and $|\Phi_v(x)|=e^{-v^2x^2/H^2}$.
On the residue class $x\equiv r\pmod*{q_v}$, the rational phase
$e(a_vx^2/q_v)$ equals $e(a_vr^2/q_v)$. Splitting into these
classes and applying~\eqref{eq:residue-poisson}, we obtain
\begin{equation}\label{eq:gaussian-poisson}
 \begin{aligned}
 \sum_{x\in\Z}e(a_vx^2/q_v)\Phi_v(x)
 &=\sum_{r\bmod q_v}e(a_vr^2/q_v)
      \sum_{\substack{x\in\Z\\x\equiv r\pmod*{q_v}}}\Phi_v(x)\\
 &=\frac1{q_v}\sum_{\ell\in\Z}
   \Bigg(\sum_{r\bmod q_v}
     e\!\left(\frac{a_vr^2+\ell r}{q_v}\right)\Bigg)
   \widehat\Phi_v\!\left(\frac\ell{q_v}\right).
 \end{aligned}
\end{equation}

The zero frequency gives the main term. Indeed,
\[
 \frac1{q_v}\sum_{r\bmod q_v}e(a_vr^2/q_v)
 =\frac{G_{q_v}(a_v)}{q_v}
 =\frac{G_q(av^2)}q
\]
by~\eqref{eq:reduced-sum}. Also, changing variables by $y=vx$ gives
\[
 v\widehat\Phi_v(0)
 =v\int_{\R}e^{-v^2x^2/H^2}e(\eta v^2x^2)\,dx
 =\int_{\R}e^{-y^2/H^2}e(\eta y^2)\,dy.
\]
Thus, after multiplication by $v$, the contribution from $\ell=0$
in~\eqref{eq:gaussian-poisson} is
\[
 \frac{G_q(av^2)}q
 \int_{\R}e^{-x^2/H^2}e(\eta x^2)\,dx.
\]

It remains to bound the contribution from $\ell\ne0$.
By the formula for the Fourier transform of the Gaussian, we have
\begin{equation}
 |\widehat\Phi_v(\xi)|
 =\frac{\sqrt\pi H}{v(1+4\pi^2\eta^2H^4)^{1/4}}
 \exp\!\left(
   -\frac{\pi^2H^2\xi^2}{v^2(1+4\pi^2\eta^2H^4)}
 \right).
\end{equation}
Since $(a_v,q_v)=1$, the Gauss-sum bound~\eqref{eq:primitive-gauss}
gives, for every $\ell\in\Z$,
\[
 \left|\frac1{q_v}\sum_{r\bmod q_v}
 e\!\left(\frac{a_vr^2+\ell r}{q_v}\right)\right|
 \le\sqrt{\frac2{q_v}}.
\]
Therefore the total contribution from $\ell\ne0$, after
multiplication by $v$, is at most
\[
 \frac{\sqrt{2\pi}\,H}
      {\sqrt{q_v}(1+4\pi^2\eta^2H^4)^{1/4}}
 \sum_{\ell\ne0}
 \exp\!\left(
  -\frac{\pi^2H^2\ell^2}
        {v^2q_v^2(1+4\pi^2\eta^2H^4)}
 \right).
\]
For every $A>0$, monotonicity of the Gaussian gives
\[
 \sum_{\ell\ne0}e^{-A^2\ell^2}
 =2\sum_{\ell\ge1}e^{-A^2\ell^2}
 \le2\int_0^\infty e^{-A^2x^2}\,dx
 =\frac{\sqrt\pi}{A}.
\]
Applying this with
$A=\pi H/(vq_v\sqrt{1+4\pi^2\eta^2H^4})$ bounds the nonzero
frequencies by
\[
 \ll v\sqrt{q_v}(1+4\pi^2\eta^2H^4)^{1/4}.
\]
Combining the zero and nonzero frequencies, we have proved
\[
 \begin{aligned}
 v\sum_{x\in\Z}e(a_vx^2/q_v)\Phi_v(x)
 &=\frac{G_q(av^2)}q
   \int_{\R}e^{-x^2/H^2}e(\eta x^2)\,dx+O\big(v\sqrt{q_v}(1+4\pi^2\eta^2H^4)^{1/4}\big).
 \end{aligned}
\]

We now sum against $\lambda_v$. The integral is independent of $v$,
and $q^{-1}\sum_v\lambda_vG_q(av^2)=C_q(a)$, so the main terms give
the one in~\eqref{eq:global-gaussian}. For the errors, use $q_v\le q$
and
\[
 q\sqrt{1+4\pi^2\eta^2H^4}
 \le q+2\pi q|\eta|H^2
 \le(1+4\pi)H.
\]
It follows from~\eqref{eq:cost} that their sum is at most
\[
 \sqrt q(1+4\pi^2\eta^2H^4)^{1/4}
 \sum_{v\le H^{1/8}}v|\lambda_v|
 \ll H^{1/2}H^{1/4}=H^{3/4}.
\]
This error absorbs the $O(H^{1/4})$ error from restoring $x=0$
and removing the cutoff $x^2v^2\le N$, proving~\eqref{eq:global-gaussian}.
\end{proof}

\section{Finishing the argument}\label{sec:finishing}
We now combine Propositions~\ref{prop:arithmetic}
and~\ref{prop:gaussian} to prove Proposition~\ref{prop:uniform}, which is the uniform lower bound for $\mathcal B_H(\theta)$.

\begin{proof}[Proof of Proposition~\ref{prop:uniform}]
Taking $a=0$, $q=1$, and $\eta=0$ in
Proposition~\ref{prop:gaussian}, and using
$C_1(0)=\sum_{v\le H^{1/8}}\lambda_v=b_1=1$ and
$I_H(0)=\sqrt\pi H$, proves~\eqref{eq:normalization} and its
stated consequences.

Fix $\theta\in\R$. Dirichlet approximation, with denominator
bound $\lfloor H\rfloor$, supplies coprime integers $a,q$ such that
\[
 1\le q\le H,\qquad
 \theta=\frac aq+\eta,\qquad
 |\eta|\le\frac2{qH}.
\]
Propositions~\ref{prop:arithmetic} and~\ref{prop:gaussian} give
\[
 \mathcal B_H(\theta)
 =b_D\bigl(1+O(c_0)\bigr)I_H(\eta)
 +O\big(H^{1-c/Y}+H^{3/4}\big),
\]
where $D=D(q)$. Here we used~\eqref{eq:gaussian-integral}
to bound $|I_H(\eta)|$ by $\sqrt\pi H$.

We now check that the two factors in the main term lie
in sectors whose product has positive real part.
By~\eqref{eq:gaussian-integral},
\[
 \arg I_H(\eta)
 =-\frac12\arg(1-2\pi i\eta H^2)
 \in(-\pi/4,\pi/4).
\]
Since $b_D>0$ and the implied constant in $O(c_0)$ is absolute,
choosing $c_0$ sufficiently small also gives
\[
 \arg\big(b_D\bigl(1+O(c_0)\bigr)\big)
 \in(-\pi/4,\pi/4).
\]
Their product therefore has argument in $(-\pi/2,\pi/2)$
and hence positive real part. It follows that
\[
 \Re\mathcal B_H(\theta)
 \ge -C\big(H^{1-c/Y}+H^{3/4}\big).
\]
Since $H^{3/4}\le H^{1-c/Y}$ for sufficiently large $H$,
increasing $C$ if necessary gives
\[
 \Re\mathcal B_H(\theta)\ge-CH^{1-c/Y}
 \qquad(\theta\in\R).
\]
Together with~\eqref{eq:normalization}, this proves the proposition.
\end{proof}

\appendix
\section{The improved bound for the Furstenberg--Sárközy theorem}\label{sec:corollary}
We record the standard proof that the van der Corput property implies the S\'ark\"ozy-type result promised in the introduction.
For this we use Fourier analysis on the finite cyclic group \(\Z/M\Z\),
with the convention in Section~\ref{sec:notation}.

\begin{proof}[Proof of Corollary~\ref{cor:main}]
Let \(M=2N+1\), and regard \(A\) as a subset of \(\Z/M\Z\).  We apply Theorem
\ref{thm:main} to obtain a trigonometric polynomial
\[
 P(\theta)=a_0+\sum_{1\le n\le\sqrt N}a_n\cos(2\pi n^2\theta)
\]
such that \(P(\theta)\ge0\) for every \(\theta\in\R/\Z\), and \(P(0)=1\).
Let
\[
  \widehat{1_A}(\xi)=\sum_{a\in A}e(a\xi/M),
  \qquad \xi\in\Z/M\Z.
\]
Since \(A\subseteq [N]\), \(1\le d\le N\), and \(M=2N+1\), the congruence
\(a'-a\equiv d\pmod M\) is the same as the equality \(a'-a=d\).  Hence the
assumption that $A-A$ contains no nonzero square gives
\[
 \frac1M\sum_{\xi\pmod* M}|\widehat{1_A}(\xi)|^2P(\xi/M)
 =\frac{a_0}{M}\sum_{\xi\pmod* M}|\widehat{1_A}(\xi)|^2
 =a_0|A|.
\]
Taking real parts and separating the zero frequency, we get
\[
 a_0|A|
 =\frac{|A|^2}{M}
 +\frac1M\sum_{\substack{\xi\pmod* M\\\xi\ne0}}
 |\widehat{1_A}(\xi)|^2P(\xi/M)
 \ge\frac{|A|^2}{M},
\]
since $P(0)=1$ and every remaining summand is nonnegative.
This implies 
\[|A|\le a_0 M\ll N\exp(-c\sqrt{\log N\log\log N}),\] as desired.
\end{proof}

\section{Proof of the lower bound}

Ruzsa's dual formulation~\cite{RuzsaConnections} suggests the following
strategy for proving a lower bound for $a_0$; see also
\cite[Chapter~2, Theorem~3]{MontgomeryTenLectures}.
A probability measure $\mu$ on $\T$ satisfying
\begin{equation}\label{squareints}
 \int_{\T}e(n^2\alpha)\,d\mu(\alpha)=0
 \qquad(1\le n\le\sqrt N)
\end{equation}
would give, for every polynomial $P$ in Theorem~\ref{thm:lower-bound},
\[
 a_0=\int_{\T}P(\alpha)\,d\mu(\alpha)
 \ge\mu(\{0\})P(0)=\mu(\{0\}),
\]
since $P\ge0$ and $P(0)=1$, so it would suffice to find such a measure with $\mu(\{0\})$ large. Inspired by this strategy, we construct a family of probability measures
for which the integrals in~\eqref{squareints} can be made close to zero,
rather than requiring them to vanish exactly.

We first offer a heuristic for the proof in terms of \emph{Poisson random walks}, and afterwards we provide a short, rigorous argument; see \cite[Section~1.3]{Applebaum2009} for more background on random walks. Fix a probability distribution $\rho$ on $\T$. For $s>0$,
there are two separate random choices. First, choose the number of steps $R$ with probabilities
\[
 \Pr(R=r)=e^{-s}\frac{s^r}{r!}
 \qquad(r=0,1,2,\ldots).
\]
This is a Poisson distribution with $s$ being the expected number of steps. Then, given $R=r$, take exactly $r$ jumps, with displacements
$X_1,\ldots,X_r$ chosen independently according to $\rho$. (For this reason we call $\rho$ the \emph{jump distribution}.)
Starting at zero, the final position is
\[
 X_1+\cdots+X_R\pmod1,
\]
with the convention that the position is zero when $R=0$. 

Write $\eta_s$ for the distribution of this final position.
For exactly $r$ steps, the distribution is the convolution of $r$
copies of $\rho$. Averaging over the possible numbers of steps gives
\[
 \eta_s
 =e^{-s}\delta_0
  +e^{-s}\sum_{r=1}^{\infty}\frac{s^r}{r!}
   \underbrace{\rho*\cdots*\rho}_{r\text{ factors}},
\]
where $\delta_0$ is the probability measure concentrated at zero.
To obtain exponential decay, we choose a distribution $\rho$ satisfying
\[
 \int_{\T}e(n^2\alpha)\,d\rho(\alpha)=1-c_n,
 \qquad c_n>0.
\]
For $R=r$, independence makes the corresponding integral equal to
$(1-c_n)^r$. Thus
\[
 \int_{\T}e(n^2\alpha)\,d\eta_s(\alpha)
 =e^{-s}\sum_{r=0}^{\infty}\frac{s^r}{r!}(1-c_n)^r
 =e^{-c_ns}.
\]
The event $R=0$ has probability
$e^{-s}$ and leaves the position exactly at zero, so
$\eta_s(\{0\})\ge e^{-s}$. The measures $\mu_{p,s}$ constructed below
are distributions of walks of this kind.

Next, take $K$ independent compound Poisson walks of the kind just
described, with possibly different jump distributions
$\rho_1,\ldots,\rho_K$. In the proof below, we use one walk for each prime in
a suitably chosen set. Let $\eta_{j,s}$ be the distribution of the final position of the
$j$th walk, and write
\[
 \int_{\T}e(n^2\alpha)\,d\rho_j(\alpha)=1-c_{j,n},
 \qquad c_{j,n}>0.
\]
The calculation above gives
\[
 \int_{\T}e(n^2\alpha)\,d\eta_{j,s}(\alpha)=e^{-c_{j,n}s}.
\]
Now add the final positions of these $K$ walks modulo one.
The distribution of their sum is
\[
 \mu_s=
 \underbrace{\eta_{1,s}*\cdots*\eta_{K,s}}_{K\text{ factors}}.
\]
Independence makes the integrals multiply, giving
\[
 \int_{\T}e(n^2\alpha)\,d\mu_s(\alpha)
 =\prod_{j=1}^{K}e^{-c_{j,n}s}
 =e^{-M(n)s},
 \qquad
 M(n):=\sum_{j=1}^{K}c_{j,n}.
\]
We choose the jump distributions so that $M(n)$ has a lower bound
valid for every $1\le n\le\sqrt N$. This allows us to make all these
integrals small at the same value of $s$.

For an admissible polynomial $P$, consider its average
\begin{equation}\label{F(s)formula}
 F(s):=\int_{\T}P(\alpha)\,d\mu_s(\alpha)
 =a_0+\sum_{1\le n\le\sqrt N}a_ne^{-M(n)s}.
\end{equation}
We compare $F$ at two values of the parameter $s$: a small value
$s_0$ where we have a fixed positive lower bound, and a larger value
$s_1$ where $F$ is close to $a_0$.
Choose $s_0:=1/K$. At this value, the probability that all $K$ walks
take zero steps is $e^{-Ks_0}=e^{-1}$, leaving their sum at zero.
Since $P\ge0$ and $P(0)=1$, this gives
\[
 F(s_0)\ge e^{-1}.
\]
Next, choose $s_1\ge s_0$ so that
\[
 \sum_{1\le n\le\sqrt N}|a_n|e^{-M(n)s_1}\le a_0.
\]
The estimates in the proof show that such a choice is possible.
Consequently, $|F(s_1)-a_0|\le a_0,$
and hence $F(s_1)\le2a_0.$
We will use Harnack's inequality
to bound $F(s_1)$ below in terms of $F(s_0)$. This transfers the
known lower bound for $F(s_0)$ to $F(s_1)$, where the inequality
$F(s_1)\le2a_0$ then forces a lower bound for $a_0$.

The finite sum in~\eqref{F(s)formula} extends to an entire function $F(w)$.
The key point is that for
$w$ in a sector $|\arg w|<\pi/(2\kappa)$, with $\kappa\ge1$, we can
still construct probability measures $\mu_w$ such that
\[
 \Re F(w)=\int_{\T}P(\alpha)\,d\mu_w(\alpha)\ge0.
\]
Thus $\Re F$ is a nonnegative harmonic function on the sector.
Harnack's inequality compares its values at $s_0$ and $s_1$, giving
\[
 2a_0\ge F(s_1)\ge\left(\frac{s_0}{s_1}\right)^\kappa F(s_0)
 \ge e^{-1}(Ks_1)^{-\kappa}.
\]

We use the following form of Harnack's inequality
for harmonic functions \cite[Lemma~2.5]{BearSmith2006}.

\begin{lemma}[Harnack's inequality]\label{lem:lower-harnack}
If $h$ is a nonnegative harmonic function defined on the right
half-plane, then for all real numbers $0<s_0\le s_1$,
\[
 h(s_1)\ge\frac{s_0}{s_1}\,h(s_0).
\]
\end{lemma}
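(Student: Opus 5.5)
The plan is to move the problem to the unit disk by a conformal map that sends $s_1$ to the centre, and then apply the classical Harnack inequality there. Let $\phi(z)=(z-s_1)/(z+s_1)$. It maps the right half-plane $\{\Re z>0\}$ biholomorphically onto the unit disk $\mathbb D$ and sends $s_1$ to $0$. Its inverse is $\psi(w)=s_1(1+w)/(1-w)$. Set $g=h\circ\psi$. A harmonic function composed with a holomorphic map is harmonic, so $g$ is nonnegative and harmonic on $\mathbb D$, with $g(0)=h(s_1)$. Also $g(r)=h(s_0)$, where
\[
 r=\phi(s_0)=\frac{s_0-s_1}{s_0+s_1}\in(-1,0],\qquad |r|=\frac{s_1-s_0}{s_1+s_0}.
\]

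Next I would prove the disk Harnack bound $g(w)\le\frac{1+|w|}{1-|w|}\,g(0)$ for every $w\in\mathbb D$. Fix $\rho$ with $|w|<\rho<1$. Since $g$ is harmonic on a neighbourhood of the closed disk of radius $\rho$, the Poisson formula gives
\[
 g(w)=\frac1{2\pi}\int_0^{2\pi}\frac{\rho^2-|w|^2}{|\rho e^{it}-w|^2}\,g(\rho e^{it})\,dt.
\]
Since $|\rho e^{it}-w|\ge\rho-|w|$, the kernel is at most $(\rho+|w|)/(\rho-|w|)$. Because $g\ge0$, we may bound the integrand by this constant times $g(\rho e^{it})$. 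The mean value property, $\frac1{2\pi}\int_0^{2\pi}g(\rho e^{it})\,dt=g(0)$, then gives
\[
 g(w)\le\frac{\rho+|w|}{\rho-|w|}\,g(0).
\]
Letting $\rho\to1$ yields the claimed bound.

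Applying this with $w=r$ gives
\[
 h(s_0)\le\frac{1+|r|}{1-|r|}\,h(s_1).
\]
Since $1+|r|=2s_1/(s_1+s_0)$ and $1-|r|=2s_0/(s_1+s_0)$, the factor equals $s_1/s_0$. This rearranges to $h(s_1)\ge(s_0/s_1)h(s_0)$, as required. The case $s_0=s_1$ is trivial.

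There is no real obstacle. The only points needing care are that $h$ is assumed harmonic only on the open half-plane, which is handled by working on disks of radius $\rho<1$ and letting $\rho\to1$, and that nonnegativity of $g$ is used when bounding the Poisson kernel.

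As an alternative check, one could use the half-plane Herglotz representation
\[
 h(x+iy)=cx+\frac1\pi\int\frac{x}{x^2+(y-t)^2}\,d\nu(t),\qquad c\ge0,\ \nu\ge0.
\]
Termwise one has $s_1/(s_1^2+t^2)\ge(s_0/s_1)\,s_0/(s_0^2+t^2)$, since this is equivalent to $s_1^2t^2\ge s_0^2t^2$, and also $cs_1\ge(s_0/s_1)cs_0$. The disk argument is more self-contained, so it is the one I would write out.
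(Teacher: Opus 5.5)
Your proof is correct. The paper never proves this lemma: it quotes it from Bear--Smith \cite{BearSmith2006} (Lemma~2.5), and the only argument it gives is the derivation of the sector version (Corollary~\ref{cor:lower-harnack}) through the substitution $z\mapsto z^{1/\kappa}$. Your argument is therefore a genuinely self-contained route where the paper has none.

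The M\"obius map $\phi(z)=(z-s_1)/(z+s_1)$ moves $s_1$ to the centre of the disk. The classical bound $g(w)\le\frac{1+|w|}{1-|w|}\,g(0)$, obtained from the Poisson formula on radius $\rho<1$, the kernel bound, the mean value property and $\rho\to1$, does all the work. At $w=\phi(s_0)$ its constant is exactly $s_1/s_0$.

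You correctly use the upper disk bound. The lower bound $g(w)\ge\frac{1-|w|}{1+|w|}\,g(0)$ would only give the reversed comparison $h(s_0)\ge(s_0/s_1)h(s_1)$.

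The citation buys brevity. Your proof makes the appendix depend on nothing beyond the Poisson formula, which suits the paper's emphasis on elementary arguments.

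Your Herglotz alternative is also valid. However, it rests on the representation theorem for nonnegative harmonic functions on the half-plane, which is deeper than the statement itself, so the disk argument is the right one to write out.

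Nothing is lost in the reduction: the constant $s_0/s_1$ is sharp, since $h(z)=\Re(1/z)$ gives equality.
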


A quick corollary of this is the following version of Harnack's inequality for harmonic
functions on a sector contained in the right half-plane.

\begin{corollary}\label{cor:lower-harnack}
Let $\kappa\ge1$. If $h$ is a nonnegative harmonic function on the sector
\[
 \{w\in\C\setminus\{0\}:\ |\arg w|<\pi/(2\kappa)\},
\]
then, for all real numbers $0<s_0\le s_1$,
\begin{equation}
 h(s_1)\ge\left(\frac{s_0}{s_1}\right)^\kappa h(s_0).
\end{equation}
\end{corollary}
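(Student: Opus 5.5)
The plan is to reduce the sector to the right half-plane by the conformal map $z\mapsto z^{1/\kappa}$ and then apply Lemma~\ref{lem:lower-harnack}. Put $\Omega_\kappa=\{w\in\C\setminus\{0\}:|\arg w|<\pi/(2\kappa)\}$ and let $\mathbb H=\{z:\Re z>0\}$. The principal branch $\phi(z)=z^{1/\kappa}=\exp(\kappa^{-1}\log z)$ is holomorphic on $\mathbb H$, where $\log$ is the principal logarithm. It maps $\mathbb H$ bijectively onto $\Omega_\kappa$, since $\arg\phi(z)=\arg z/\kappa$ ranges over $(-\pi/(2\kappa),\pi/(2\kappa))$ and $|\phi(z)|=|z|^{1/\kappa}$ ranges over $(0,\infty)$. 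The hypothesis $\kappa\ge1$ is used only to ensure that $\Omega_\kappa$ is contained in $\mathbb H$ and that the branch involved is principal. Nothing essential depends on it, since the map is a conformal bijection for every $\kappa>0$.

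Define $g=h\circ\phi$ on $\mathbb H$. A harmonic function composed with a holomorphic map is harmonic. One way to see this is locally: on a disc in $\Omega_\kappa$ write $h=\Re f$ with $f$ holomorphic, so that $g=\Re(f\circ\phi)$ near each point. Alternatively, use the chain rule identity $\Delta(h\circ\phi)=|\phi'|^2(\Delta h)\circ\phi$. In either case $g$ is a nonnegative harmonic function on the right half-plane.

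Now take real numbers $0<s_0\le s_1$. Then $t_j:=s_j^\kappa$ are positive reals with $t_0\le t_1$, and $\phi(t_j)=s_j$ because the principal root of a positive real is the positive real root. Lemma~\ref{lem:lower-harnack} applied to $g$ gives
\[
h(s_1)=g(t_1)\ge\frac{t_0}{t_1}\,g(t_0)=\left(\frac{s_0}{s_1}\right)^{\kappa}h(s_0),
\]
which is the claimed inequality.

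There is no real obstacle here. The only points that need care are that the composition $g$ is indeed harmonic and that $\phi$ sends the positive real axis to itself with the parametrization $t\mapsto t^{1/\kappa}$. Both follow directly from the use of the principal branch.
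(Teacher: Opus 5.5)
Your proof is correct and is exactly the paper's argument: apply Lemma~\ref{lem:lower-harnack} to the nonnegative harmonic function $z\mapsto h(z^{1/\kappa})$ on the right half-plane at the positive real points $s_0^\kappa\le s_1^\kappa$. One side remark is inaccurate but harmless. Bijectivity of $z\mapsto z^{1/\kappa}$ is never needed; you only need that it maps the right half-plane holomorphically into the sector and sends each positive real $t$ to $t^{1/\kappa}$. Your aside that it is a bijection onto the sector for every $\kappa>0$ actually fails when $\kappa<1/2$.
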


\begin{proof}
This follows immediately from Lemma~\ref{lem:lower-harnack} applied
to the nonnegative harmonic function
$z\mapsto h(z^{1/\kappa})$ on the right half-plane at the positive
real points $s_0^\kappa\le s_1^\kappa$.
\end{proof}

For a prime $p$, let $\nu_p(n)$ denote the $p$-adic valuation
of a positive integer $n$. In other words, $\nu_p(n)$ is the largest
nonnegative integer $k$ such that $p^k\mid n$.

\begin{lemma}\label{lem:lower-local}
Let $p\equiv3\pmod4$ be prime. If $\Re w>0$ and
$|\Im w|\le(\Re w)/(2\sqrt p)$, there is a probability measure
$\mu_{p,w}$ on $\T$ such that
\begin{equation}
 \int_{\T}e(n^2\alpha)\,d\mu_{p,w}(\alpha)
 =\exp\!\left(-2^{-\nu_p(n)}w\right)
 \qquad(n\ge1),
\end{equation}
and
\[
 \mu_{p,w}(\{0\})\ge e^{-\Re w}.
\]
\end{lemma}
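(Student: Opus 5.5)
The plan is to realize $\mu_{p,w}$ as the law of a compound Poisson walk, that is, as the exponential of a signed "generator" measure. Concretely, I will find a real measure $L$ on $\T$, supported on $p$-power-denominator rationals, of the form $L=L_+-(\Re w)\delta_0$ with $L_+\ge0$ and $L_+(\T)=\Re w$, satisfying
\[
 \int_{\T}e(n^2\alpha)\,dL(\alpha)=-2^{-\nu_p(n)}w\qquad(n\ge1).
\]
I then set
\[
 \mu_{p,w}=e^{-\Re w}\sum_{r\ge0}\frac{L_+^{*r}}{r!},
\]
with $L_+^{*0}=\delta_0$. This is a probability measure because $L_+\ge0$ and $L_+(\T)=\Re w$. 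Its Fourier coefficients are $\exp\bigl(\int e(n^2\alpha)\,dL\bigr)=\exp(-2^{-\nu_p(n)}w)$. The $r=0$ term gives $\mu_{p,w}(\{0\})\ge e^{-\Re w}$.

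\textbf{Step 1 (real part).} For $k\ge1$ let $\rho_k$ be the uniform probability measure on $\{b/p^k: b\bmod p^k\}$. Then $\int e(n^2\alpha)\,d\rho_k=1$ if $p^k\mid n^2$ and $0$ otherwise, so $\int e(n^2\alpha)\,d(\rho_k-\delta_0)=-\mathbf 1[2\nu_p(n)<k]$. Take odd levels $k=2m+1$ with weights $2^{-m-1}$. Since $\sum_{m\ge\nu}2^{-m-1}=2^{-\nu}$, the measure
\[
 \Re w\sum_{m\ge0}2^{-m-1}(\rho_{2m+1}-\delta_0)
\]
produces $-2^{-\nu_p(n)}\Re w$. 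Its total positive mass is $\Re w$, and it places mass $\Re w\,2^{-m-1}p^{-2m-1}$ on each nonzero point $b/p^{2m+1}$.

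\textbf{Step 2 (imaginary part via twisted Gauss sums).} Let $\chi$ be the Legendre symbol mod $p$. Since $p\equiv3\pmod4$, we have $G_p(1)=\varepsilon i\sqrt p$ with $\varepsilon=\pm1$. Define a real signed measure $\tau_m$ placing mass $\chi(b)/(\varepsilon\sqrt p\,p^{2m})$ at $b/p^{2m+1}$, for $b\bmod p^{2m+1}$. Write $n^2=p^{2\nu}u^2$ with $p\nmid u$. If $\nu>m$ the sum is $\sum_b\chi(b)=0$. If $\nu<m$, the additive character has conductor greater than $p$ while $\chi$ is $p$-periodic, so the sum vanishes after summing over $b$ in classes mod $p$. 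If $\nu=m$, the sum equals $p^{2m}\chi(u^2)\varepsilon i\sqrt p$. Hence
\[
 \int e(n^2\alpha)\,d\tau_m=i\,\mathbf 1[\nu_p(n)=m],
\]
and therefore $-\Im w\sum_m2^{-m}\tau_m$ contributes $-i\,2^{-\nu_p(n)}\Im w$. Each $\tau_m$ has total mass $0$ and vanishes at $\alpha=0$, since $\chi(0)=0$.

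\textbf{Step 3 (positivity, the main obstacle).} Set $L$ equal to the sum of the measures from Steps 1 and 2. It remains to check that $L$ is nonnegative off $0$. At a nonzero point $b/p^{2m+1}$ this requires
\[
 |\Im w|\,2^{-m}p^{-2m-1/2}\le\Re w\,2^{-m-1}p^{-2m-1},
\]
which is exactly the hypothesis $|\Im w|\le\Re w/(2\sqrt p)$. Points with $p\mid b$ carry only the nonnegative uniform mass. Thus $L_+\ge0$ and $L_+(\T)=\Re w$, because the twisted parts have total mass zero. The care needed is in the vanishing of the twisted sums for $\nu<m$ and in the pointwise comparison. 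Convergence of the series over $m$ is harmless: each fixed $n$ only sees finitely many nonzero contributions, and the total variation is summable thanks to the factors $2^{-m}$.
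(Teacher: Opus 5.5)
Your proposal is correct and is essentially the paper's construction: your jump measure $L_+$ coincides exactly with the paper's $(\Re w)\rho$, where $\rho=\sum_{j\ge0}2^{-j-1}\sigma_{j,\tau}$. The paper's weights $q_a=1/p+(\Im\tau/\sqrt p)(a/p)$ are just your uniform level-$(2j+1)$ mass plus your Legendre-twisted $\tau_j$ term, and the condition $q_a\ge0$ is your pointwise check in Step~3; the only difference is that you separate real and imaginary parts instead of packaging them into $q_a$, and your hedge $\varepsilon=\pm1$ is harmless (in fact $\varepsilon=1$).
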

\begin{proof}
Write $s=\Re w>0$ and put
\[
 \tau=-\frac{2i\Im w}{s},
 \qquad |\tau|\le\frac1{\sqrt p}.
\]
For each $a\in\Z/p\Z$, define
\[
 q_a=\frac1p+\frac{\Im\tau}{\sqrt p}\left(\frac ap\right),
\]
where $(\frac ap)$ denotes the Legendre symbol.
These coefficients are nonnegative and sum to one. For each $j\ge0$,
let
\[
 \sigma_{j,\tau}
 =\frac1{p^{2j}}\sum_{b\in\Z/p^{2j}\Z}\,\sum_{a\in\Z/p\Z}
 q_a\delta_{(a+pb)/p^{2j+1}},
\]
where $\delta_x$ denotes the Dirac measure at $x\in\T$.
This is a finitely supported probability measure. Orthogonality gives
\[
 \frac1{p^{2j}}\sum_{b\in\Z/p^{2j}\Z}
 e\!\left(\frac{n^2b}{p^{2j}}\right)
 =\mathbf1_{p^j\mid n},
\]
so
\[
 \int_{\T}e(n^2\alpha)\,d\sigma_{j,\tau}(\alpha)=0
\]
when $p^j\nmid n$. If $n=p^jr$ with $p\nmid r$, the quadratic Gauss sum
\[
 \sum_{a\in\Z/p\Z}\left(\frac ap\right)e\!\left(\frac{ar^2}{p}\right)
 =\sum_{x\in \Z/p\Z}e\!\left(\frac{x^2}{p}\right)=i\sqrt p
\]
implies
\[
 \int_{\T}e(n^2\alpha)\,d\sigma_{j,\tau}(\alpha)
 =\sum_{a\in\Z/p\Z}q_a e\!\left(\frac{ar^2}{p}\right)
 =i\Im\tau=\tau.
\]
If $p\mid r$, every exponential in this sum is one, so that
\begin{equation}
 \int_{\T}e(n^2\alpha)\,d\sigma_{j,\tau}(\alpha)
 =\begin{cases}
  0,&\nu_p(n)<j,\\
  \tau,&\nu_p(n)=j,\\
  1,&\nu_p(n)>j.
 \end{cases}
\end{equation}

Now consider the probability measure
\[
 \rho=\sum_{j=0}^{\infty}2^{-j-1}\sigma_{j,\tau}.
\]
For $k=\nu_p(n)$, we obtain
\[
 \int_{\T}e(n^2\alpha)\,d\rho(\alpha)
 =\sum_{j=0}^{k-1}2^{-j-1}+2^{-k-1}\tau
 =1-2^{-k}\frac ws.
\]
Define
\[
 \mu_{p,w}=e^{-s}\delta_0
 +e^{-s}\sum_{r=1}^{\infty}\frac{s^r}{r!}
 \underbrace{\rho*\cdots*\rho}_{r\text{ factors}}.
\]
Then
$\mu_{p,w}$ is a probability measure. Moreover,
\[
 \int_{\T}e(n^2\alpha)\,d\mu_{p,w}(\alpha)
 =e^{-s}\sum_{r=0}^{\infty}\frac{s^r}{r!}
   \left(1-2^{-k}\frac ws\right)^r
 =\exp\!\left(-s+s\left(1-2^{-k}\frac ws\right)\right)
 =e^{-2^{-k}w}.
\]
Finally, the term $e^{-s}\delta_0$ gives $\mu_{p,w}(\{0\})\ge e^{-s}$.
\end{proof}

We now deduce Theorem~\ref{thm:lower-bound} from this lemma.
\begin{proof}[Proof of Theorem~\ref{thm:lower-bound}]
Put
$X=\lfloor\sqrt N\rfloor$ and fix a polynomial
\[
 P(\alpha)=a_0+\sum_{1\le n\le X}a_n\cos(2\pi n^2\alpha),
 \qquad P\ge0,\qquad P(0)=1.
\]
Orthogonality and nonnegativity give
\begin{equation}\label{eq:lower-coefficients}
 a_0=\int_{\T}P(\alpha)\,d\alpha>0,
 \qquad
 |a_n|=2\left|\int_{\T}P(\alpha)e(-n^2\alpha)\,d\alpha\right|
 \le2a_0.
\end{equation}
We may assume that $X$ is sufficiently large and $a_0\le1$. Let
\[
 y=\frac{4\log X}{\log\log X},
\]
and denote by $\mathcal P$ the set of primes $p\le y$ such that
$p\equiv 3\pmod4$. The prime number theorem for primes in the fixed
progression $3\pmod4$ gives
\[
 \sum_{p\in\mathcal P}\log p=(1+o(1))\frac{y}{2}\ge\frac y4
 =\frac{\log X}{\log\log X}.
\]
For every $1\le n\le X$,
\[
 \left(\min_{p\in\mathcal P}\nu_p(n)\right)
 \sum_{p\in\mathcal P}\log p
 \le\sum_{p\in\mathcal P}\nu_p(n)\log p
 \le\log n\le\log X.
\]
Thus some $p\in\mathcal P$ has $\nu_p(n)\le\log\log X$, and hence
\begin{equation}\label{eq:lower-exponent-bound}
 M(n):=\sum_{p\in\mathcal P}2^{-\nu_p(n)}
 \ge2^{-\log\log X}\ge\frac1{\log X}.
\end{equation}

For every $p\in\mathcal P$ and every $w\in\C$ with $\Re w>0$ and
$|\Im w|\le(\Re w)/(2\sqrt y)$,
Lemma~\ref{lem:lower-local} supplies a probability measure
$\mu_{p,w}$ on $\T$. Set
\[
 \mu_w:=\mathop{*}_{p\in\mathcal P}\mu_{p,w},
\]
the convolution of all $\mu_{p,w}$ over $p\in\mathcal P$. Then
\begin{equation}
 \int_{\T}e(n^2\alpha)\,d\mu_w(\alpha)
 =\prod_{p\in\mathcal P}\exp\!\left(-2^{-\nu_p(n)}w\right)
 =e^{-M(n)w}.
\end{equation}
Consequently, the entire function
\[
 F(w):=a_0+\sum_{1\le n\le X}a_ne^{-M(n)w}
\]
satisfies
\begin{equation}\label{eq:lower-average-positive}
 \Re F(w)=\int_{\T}P(\alpha)\,d\mu_w(\alpha)\ge0
\end{equation}
for every $w\in\C$ with $\Re w>0$ and
$|\Im w|\le(\Re w)/(2\sqrt y)$. For any real $s>0$,
\begin{equation}\label{eq:lower-real-axis}
 F(s)\ge\mu_s(\{0\})P(0)
 \ge\prod_{p\in\mathcal P}\mu_{p,s}(\{0\})
 \ge\prod_{p\in\mathcal P}e^{-s}
 = e^{-s|\mathcal P|}.
\end{equation}

Put $s_0=1/|\mathcal P|$ and $s_1=2(\log X)^2$. By~\eqref{eq:lower-coefficients}
and~\eqref{eq:lower-exponent-bound},
\[
 |F(s_1)-a_0|
 \le2a_0X e^{-s_1/\log X}
 =\frac{2a_0}{X}\le a_0,
\]
so $F(s_1)\le2a_0$. Set
\[
 \kappa=\frac{\pi}{2\arctan(1/(2\sqrt y))}\asymp\sqrt y.
\]
The interior of the sector in~\eqref{eq:lower-average-positive} is
$|\arg w|<\pi/(2\kappa)$. Applying Corollary~\ref{cor:lower-harnack}
to $\Re F$ at $s_0$ and $s_1$, and then
using~\eqref{eq:lower-real-axis}, we have
\begin{equation}
 2a_0\ge F(s_1)
 \ge(s_1|\mathcal P|)^{-\kappa}F(s_0)
 \ge e^{-1}(s_1|\mathcal P|)^{-\kappa}.
\end{equation}
Since $|\mathcal P|\le y$ and $\log(s_1 y)\ll\log\log X$, it follows that
\[
 \log\frac1{a_0}
 \ll\sqrt y\,\log\log X
 \ll\sqrt{\log X\log\log X}
 \ll\sqrt{\log N\log\log N},
\]
which proves the theorem.
\end{proof}

\bibliographystyle{alpha}
\bibliography{references}
\end{document}